\numberwithin{equation}{section}
\newtheorem{theorem}{Theorem}[section]
\newtheorem{corollary}[theorem]{Corollary}
\newtheorem{proposition}[theorem]{Proposition}
\newtheorem{lemma}[theorem]{Lemma}
\theoremstyle{remark}
\newtheorem{remark}{Remark}[section]
\theoremstyle{definition}
\newcommand{\R}{\mathbb{R}}
\newcommand{\C}{\mathbb{C}}
\newcommand{\N}{\mathbb{N}}
\newcommand{\Z}{\mathbb{Z}}
\newcommand{\lap}{\bigtriangleup}
\newcommand{\an}[1]{\langle #1 \rangle}
\newcommand{\grad}{\bigtriangledown}
\def\XXint#1#2#3{{\setbox0=\hbox{$#1{#2#3}{\int}$ }
\vcenter{\hbox{$#2#3$ }}\kern-.58\wd0}}
\begin{document}

\title
[Strichartz on curved space]
{Strichartz estimates for the Dirac equation on spherically symmetric spaces}

\begin{abstract}
\end{abstract}



\author{Federico Cacciafesta}
\address{Federico Cacciafesta: Dipartimento di Matematica, Universit$\grave{\text{a}}$ degli studi di Padova, Via Trieste, 63, 35131 Padova PD, Italy.}
\email{cacciafe@math.unipd.it}

\author{Anne-Sophie de Suzzoni}
\address{Anne-Sophie de Suzzoni: CMLS, \'Ecole Polytechnique, CNRS, Universit\'e Paris-Saclay, 91128 PALAISEAU Cedex, France.}
\email{anne-sophie.de-suzzoni@polytechnique.edu}

\subjclass[2010]{35J10, 35B99.}

\begin{abstract}
We prove local in time Strichartz estimates for the Dirac equation on spherically symmetric manifolds. As an application, we give a result of local well-posedness for some nonlinear models.
\end{abstract}

\maketitle


\section{Introduction}

In this paper we continue the study of the dynamics of the Dirac equation on curved spaces that we began in \cite{cacdes1}, in which we proved weak dispersive estimates for the flow in some different frameworks. 
%
We recall that the general form of the Dirac operator on a manifold $M$ with a given metric $g_{\mu\nu}$ is the following
 \begin{equation}\label{eq}
\mathcal{D}=i\gamma^a e^\mu_a D_\mu
\end{equation}
where the matrices $\gamma_0=\beta$ and $\gamma^j=\gamma^0\alpha_j$ for $j=1,2,3$ with
\begin{equation}\label{diracmatrices}
\alpha_k=\left(\begin{array}{cc}0 & \sigma_k \\\sigma_k & 0\end{array}\right),\quad k=1,2,3,\qquad\beta=
\left(\begin{array}{cc}I_2 & 0 \\0 & -I_2\end{array}\right)
\end{equation}
and
 \begin{equation}
\sigma_1=\left(\begin{array}{cc}0 & 1 \\1 & 0\end{array}\right),\quad
\sigma_2=\left(\begin{array}{cc}0 &
-i \\i & 0\end{array}\right),\quad
\sigma_3=\left(\begin{array}{cc}1 & 0\\0 & -1\end{array}\right),
\end{equation}
 $e^\mu$  is a \emph{vierbein} (i.e. a set of matrices that, essentially, connect the curved space-time to the Minkowski one and corresponds to a choice of frame for the tangent space in Cartesian formalism) and $D_\mu$ defines the covariant derivative for fermionic fields. For all the details on the construction and properties of the Dirac operator on a non-flat background we refer to \cite{cacdes1,parktoms}.
In what follows, we shall again restrict to metrics $g_{\mu\nu}$ having the following structure
\begin{equation}\label{struct}
g_{\mu\nu} = \left \lbrace{\begin{array}{ll}
\phi(t) & \textrm{ if } \mu= \nu = 0\\
0 & \textrm{ if } \mu\nu = 0 \textrm{ and } \mu\neq \nu\\
-h_{\mu\nu}(\overrightarrow x) & \textrm{ otherwise ,}\end{array}} \right.
\end{equation}
that is, decouple time and space. Also, as a further simplification, we assume $\phi(t)=1$; this after a change of variable in time, actually allows to cover all the possible choices of $\phi(t)$ strictly positive for all $t$.  In this setting, the (Cauchy problem for the) Dirac equation assumes the convenient form
\begin{equation}\label{diraceqcurv}
\begin{cases}
i \partial_t u - H u = 0,\\
u(0,x)=u_0(x)
\end{cases}
\end{equation}
where $H$ is an operator such that $H^2=-\Delta_h+\frac14\mathcal{R}_h+m^2$, $\Delta_h$  is the Laplace-Beltrami operator for Dirac spinors, that is, $\lap_h = D^jD_j$ where $D_j$ is the covariant derivative for Dirac spinors that we properly define later, and $\mathcal R_h$ is the scalar curvature associated to the spatial metrics $h$.  As a consequence, it can be proved that if $u$ solves equation \eqref{diraceqcurv} then $u$ also solves the equation
\begin{equation}\label{quaddir}
-\partial_t^2 u  + \lap_h u - \frac14 \mathcal R_h u -m^2 u = 0.
\end{equation}
We point out that the scalar curvature term vanishes when specializing formula above to the standard Minkowski space, so that in this case this formula recovers the well-known one. What is more, the covariant derivatives in the usual choice of vierbein is simply given by $D_\mu = \partial_\mu$.
This remark is extremely useful, as it often allows to translate some well-known facts for the wave or Klein-Gordon equations to the Dirac setting. In particular, this remark is key for proving dispersive estimates for the Dirac flow, also in presence of small potential perturbations: in \cite{cacdes1} indeed, the classical Morawetz-multiplier method was adapted to equation \eqref{quaddir} to obtain local smoothing estimates for different choices of the metrics $h_{\mu\nu}$, which include asymptotically flat and some warped product manifolds.

The subsequent natural step would now be to prove {\em Strichartz estimates} for the Dirac equation in these settings; unfortunately, it is not really possible to apply the standard Duhamel trick combined with local smoothing (see e.g. \cite{boussdanfan}) to deal with equation \eqref{diraceqcurv} as a perturbation of of the flat Dirac equation, due to the fact that we are in presence of a high order perturbation. Therefore, as it is often the case when it comes to variable coefficients dispersive PDEs, some different strategies need to be developed in order to obtain Strichartz estimates

%
\medskip
In this manuscript we focus on the case of {\em spherically symmetric manifolds}, i.e. manifolds $(M,g)$ defined by $M=\mathbb{R}_t\times \Sigma$ where $\Sigma=\mathbb{R}_r^{+}\times\mathbb{S}^2_{\theta,\phi}$ equipped with the Riemannian metrics
\begin{equation}\label{sphermet}
d\sigma=dr^2+\varphi(r)^2d\omega_{\mathbb{S}^2}^2
\end{equation}
where $d\omega_{\mathbb{S}^2}^2=(d\theta^2+\sin^2\theta d\phi^2)$ is the Euclidean metrics on the 2D sphere $\mathbb{S}^2$. Notice that taking $\varphi(r)=r$ reduces $\Sigma$ to the standard 3D euclidean space, and therefore $M$ to be the standard Minkowski space. Obviously, various different assumptions can be made on the functions $\varphi(r)$ leading to very different geometrical situations; we assume the following set of hypothesis.
\medskip

\textbf{Assumptions (A1)} Take $\varphi(r)\in C^\infty(\mathbb{R^+})$ strictly positive on $(0,+\infty)$, such that
\begin{equation}\label{assh}
\varphi(0)=\varphi^{(2n)}(0)=0,\qquad \varphi'(0)=1,\qquad \Big|\frac{\varphi'(r)}{\varphi(r)}\Big|\leq C\ \qquad \inf_{r\geq 1} \varphi(r) > 0. 
\end{equation}
We also assume that the scalar curvature of $M$ is bounded. It might be negative though.

\begin{remark}
These assumptions are fairly natural for the present contest: indeed, in order for a a smooth and spherically symmetric manifold $M$ to have a global metrics of the form \eqref{sphermet} the function $\varphi(r)$ must be the restriction to $\mathbb{R}^+$ of a $C^\infty$ odd function with $\varphi(0)=0$ and $\varphi'(0)=1$.  In fact, these are essentially the same assumptions made by the authors in \cite{banduy} in order to obtain similar results for the Schr\"odinger equation. Also, these assumptions guarantee that the manifold $M$ is smooth (see \cite{pet} paragraph 1.3.4).
 \end{remark}
 
The basic advantage in having a spherically symmetric manifold relies on the fact that it is possible to decompose the Dirac operator (and analogously its flow) in a sum of "radial" Dirac operators (see section \ref{radsec} for details) and so, somehow, handle the geometric term after a change of variable as a potential perturbation. This strategy is strongly inspired by \cite{banduy}, in which the authors obtain local and global in time Strichartz estimates for the Schr\"odinger flow on spherically symmetric manifolds. However, we should stress the two main differences with respect to their case, which also represent the two main difficulties here: first, the "radial" decomposition for the Dirac operator is much more subtle, and forces to work on 2-dimensional angular spaces, due to the fact that the Dirac operator does {\em not} preserve radial spinors. Second, this approach naturally produces, as we will see, {\em scaling critical} potential perturbations, and while for the Schr\"odinger equation with inverse square potential dispersive estimates are well known (see \cite{burq1}, \cite{burq2}), for the Dirac equation with a Coulomb-type potential only some weak local smoothing effect has been proved (see \cite{cacser}) but nothing is known at the level of Strichartz estimates.
\medskip

Before stating our main results, let us fix some notations.
\medskip

{\bf Notations.} We will use the standard notation $L^p$, $\dot{H}^s$, $H^s$, $W^{p,q}$ to denote, respectively, the Lebesgue and the homogeneous/non homogeneous Sobolev spaces of functions from $\R^3$ to $\C^4$. We will use the same notation to denote these functional spaces on the (spatial) manifold $(M_h,h)$, which is in our structure \eqref{struct}, i.e. with time and space already decoupled, by adding the dependence $L^p(M)$, $\dot{H}^s(M)$, $H^s(M)$, $W^{p,q}(M)$: e.g., the norm $L^p(M)$ will be given by
$$
\|f\|_{L^p(M)}^p:=\int |f(x)|^p \sqrt{\det(h(x))}dx
$$
and so on. In particular, due to the spherically symmetric structure of the metrics \eqref{sphermet}, for a radial function $f_{rad}(|x|)$ we will have
$$
\|f_{rad}\|_{L^p(M)}^p:=\int_0^{+\infty} |f_{rad}(r)|^p \varphi(r)^2dr 
$$
and similarly for the Sobolev spaces. Note that since we are dealing with vectors in $\C^4$, $|f(x)|$ should be understood as 
$$
|f(x)| = \sqrt{\an{f(x),f(x)}_{\C^4}}
$$
and because we are dealing with spinors, the derivatives we take for the Sobolev norms are covariant derivatives. For instance, the $\dot H^1(M)$ norm of some map $f$ is given by
$$
\|f\|_{\dot H^1(M)}^2 =  \sum_{j=1}^{3} \int  |D_j f(x)|^2 \sqrt{\textrm{det }h(x)} dx,
$$
which can be written, since $M$ is smooth as
$$
\|f\|_{\dot H^1(M)}^2 =  - \int  \an{\lap_h f(x),f(x)}_{\C^4} \sqrt{\textrm{det }h(x)} dx
$$
with $\lap_h = D^jD_j$ the covariant Laplace-Beltrami operator for Dirac (bi)spinors.

The norms in time will be denoted by $L^p_t(I)$, and the time interval $I$ will be allowed to be bounded or unbounded. The mixed Strichartz spaces will be denoted by $ L^p_t(I)L^q(M)=L^p(I;L^q(M,\C^4))$. In what follows we will also need Lebesgue spaces which separate radial from angular regularity: we will use the notation
$$
\|f\|_{L^p_{\varphi(r)^2dr}L^q_\omega}:=\int_0^{+\infty}\|f(r,\cdot)\|_{L^q(\mathbb{S}^2)}^q\varphi(r)^2 dr.
$$
The operator $\Lambda_\omega$ will denote the fractional Laplace-Beltrami operator on $\mathbb{S}^2$, that is $\Lambda^s_\omega=(1-\Delta_{\mathbb{S}^2})^{s/2}$. A crucial role in our analysis will be played by the so called partial wave decomposition, a detailed discussion of which is postponed to section 3. In order to state the result, we only limit here to recall the notations from \cite{thaller}: there exists an orthogonal decomposition
$$
L^2(\mathbb{S}^2)^4\cong\bigoplus_{j,m_j,k_j} \mathcal{H}_{j,m_j,k_j}
$$
where the spaces $\mathcal{H}_{j,m_j,k_j}$ are called partial wave subspaces. They are defined as in \cite{thaller} Subsection 4.6.4, each $\mathcal H_{j,m_j,k_j}$ is of dimension $2$ and the indexation works as $j\in \frac12 + \N$, $m_j \in (\frac12 + \Z) \cap [-j,j]$ and $k_j = \pm (j+\frac12)$.

In order to have a more compact notation, we introduce the spaces 
\begin{equation}\label{Hj}
\mathcal H_j = \bigoplus_{m_j,k_j} \mathcal H_{j, m_j,k_j}
\end{equation}
and with the convention $\mathcal H_{-1/2} = \{0\}$, for $n\in \N$,
\begin{equation}\label{Pn}
\mathcal P_n = \mathcal H_{n-1/2} \oplus \mathcal H_{n+1/2}.
\end{equation}
We also write $\mathcal S_n$ the spherical harmonics from $S^2$ to $\C^4$ with degree $n$. Note that
$$
\mathcal S_n \subseteq \mathcal P_n.
$$

\medskip
We are now ready to state our first Theorem, that 	contains local-in-time Strichartz estimates for the Dirac flow under the Assumptions {\bf (A1)} for initial condition with prescribed angular component. 

\begin{theorem}\label{teo1}
Let $g$ be as in \eqref{struct}, $h$ having the structure \eqref{sphermet} and satisfying Assumptions {\bf (A1)}. Then for any bounded interval $I=(0,T)$, $T>0$, there exists a constant $C_T$ such that the solutions $u$ to \eqref{diraceqcurv} with initial condition $u_0\in H^s((0,+\infty)\varphi(r)^2dr)\otimes\mathcal{P}_{n}$  for a fixed $n$ satisfy estimates
\begin{equation}\label{strichartznew1}
\left\|u\left(\frac{\varphi(r)}r\right)^{1-\frac2q}\right\|_{L^p_t(I)L^q(M)}\leq C_T\langle n\rangle \|u_0\|_{H^s(M)}
\end{equation}
provided $s=2/p$, $\frac2{p} + \frac2{q} = 1$ and $p \in ]2,\infty]$ but also 
\begin{equation}\label{strichartznew2}
\left\|u\left(\frac{\varphi(r)}r\right)^{1-\frac2q}\right\|_{L^p_t(I)L^q(M)}\leq C_T\langle n \rangle \|u_0\|_{H^s(M)}
\end{equation}
provided that $m\neq 0$, $s=1/p$, $\frac2{p} + \frac3{q}= \frac32$ and $p\in [2,\infty]$.
\end{theorem}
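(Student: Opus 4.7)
The proof rests on three ingredients: the spherical symmetry lets us decompose the Dirac flow into partial waves, an explicit radial weight converts each partial-wave equation to one on flat $\mathbb{R}^3$ perturbed by a scaling critical potential, and on a fixed angular sector $\mathcal{P}_n$ this perturbation can be controlled well enough to transport the flat Strichartz estimates to the curved setting.

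\emph{Step 1: Partial wave reduction.} Using the partial wave decomposition of Section 3, each $\mathcal{P}_n$ is invariant under $H$ (the Dirac operator couples $\mathcal{H}_{n-1/2}$ and $\mathcal{H}_{n+1/2}$), so $u(t)\in H^s((0,\infty),\varphi^2 dr)\otimes \mathcal{P}_n$ for every $t$. On each $\mathcal{H}_{j,m_j,k_j}\subset \mathcal{P}_n$, $H$ reduces to an explicit $2\times 2$ matrix first-order differential operator in $r$ with coefficients depending on $k_j=\pm(j+1/2)$, $m$, $\varphi$ and $\varphi'$.

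\emph{Step 2: Reduction to a perturbed flat equation.} Define $v(r,\omega):=\bigl(\varphi(r)/r\bigr)u(r,\omega)$. This map is an isometry $L^2((0,\infty),\varphi^2 dr)\otimes L^2(\mathbb{S}^2)\to L^2(\mathbb{R}^3)$, and the change of measure gives
$$
\|u\|_{L^q(M)}=\Bigl\|v\,(\varphi(r)/r)^{2/q-1}\Bigr\|_{L^q(\mathbb{R}^3)},
$$
which is precisely the weight appearing in \eqref{strichartznew1}--\eqref{strichartznew2}. A direct computation shows that $v$ satisfies a perturbed flat Dirac equation
$$
i\partial_t v = \mathcal{D}_{0}\, v + V(r)\, v,
$$
where $\mathcal{D}_0$ is the flat Dirac operator on $\mathbb{R}^3$ and, on each $\mathcal{H}_{j,m_j,k_j}$, the matrix-valued potential $V$ has leading part of the form $k_j\bigl(1/\varphi-1/r\bigr)$, plus bounded contributions from $\varphi'/\varphi-1/r$ and from the scalar curvature $\mathcal{R}_h$. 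Assumptions \textbf{(A1)} (in particular $\varphi(r)=r+O(r^3)$ near zero and $\inf_{r\geq 1}\varphi>0$) imply that $V$ is bounded away from $r=0$ and, near $r=0$, at worst Coulomb-like, hence scaling critical.

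\emph{Step 3: Strichartz estimates for the perturbed flat equation.} For $\mathcal{D}_0$ on $\mathbb{R}^3$ both families of Strichartz estimates are classical: the wave-admissible scale-invariant estimates with $s=2/p$, $2/p+2/q=1$, $p>2$, and the Klein-Gordon-admissible estimates with $s=1/p$, $2/p+3/q=3/2$, $p\geq 2$, valid when $m\neq 0$. To absorb $V$ on the bounded interval $I=(0,T)$ we use Duhamel together with Christ--Kiselev for the locally bounded part of $V$; the scaling-critical component $k_j/r$ is bounded in operator norm by $\langle n\rangle \cdot 1/r$ on $\mathcal{P}_n$ and is handled via the local smoothing estimates established in \cite{cacdes1} together with a Hardy-type bound. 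Undoing the change of variable of Step 2 then yields \eqref{strichartznew1}--\eqref{strichartznew2} for $u$ on $M$, with the $\langle n\rangle$ factor tracking the strength of the dangerous term on the chosen angular sector.

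\emph{Main obstacle.} The chief difficulty is the scaling-critical character of $V$: for the flat Dirac--Coulomb problem no Strichartz estimates are currently available. The restriction to a single angular sector $\mathcal{P}_n$ is precisely what makes the argument close, since it bounds the strength of the $1/r$ singularity by $\langle n\rangle$ and allows a local-in-time bootstrap on $I$, at the price of the $\langle n\rangle$ loss in the final estimate.
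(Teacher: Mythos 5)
Your overall architecture (partial wave decomposition, the weight $\varphi(r)/r$, reduction to a potential-perturbed flat Dirac equation, Duhamel) matches the paper's approach, but there is a genuine gap in how you characterize and handle the resulting potential, and the fix you propose would not close the argument.

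The crucial point is that, on each partial wave subspace, the potential produced by the weight is $V_{j,m_j,k_j}=k_j\bigl(\tfrac1{\varphi(r)}-\tfrac1{r}\bigr)$ times a fixed $2\times2$ matrix, and under Assumptions (A1) this scalar coefficient is a \emph{bounded} function on $(0,\infty)$, in fact in $W^{1,\infty}$. Near $r=0$ the oddness and normalization of $\varphi$ (i.e. $\varphi(r)=r+O(r^3)$) give $\tfrac1{\varphi}-\tfrac1{r}=\frac{r-\varphi}{r\varphi}=O(r)$, so there is no Coulomb-like singularity at the origin; away from the origin $\inf_{r\ge1}\varphi>0$ and $|\varphi'/\varphi|\le C$ give boundedness. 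This is exactly the content of Lemma~\ref{lemma1}, and the $\langle n\rangle$ loss comes solely from the size $|k_j|\le n+1$ of the angular eigenvalues on $\mathcal P_n$, not from the strength of a singularity. Your Step 3 instead asserts that $V$ is ``at worst Coulomb-like'' near zero and proposes to absorb a $k_j/r$ component via local smoothing from \cite{cacdes1} plus a Hardy-type bound. That step is both based on a false premise and methodologically problematic: the paper explicitly notes that the Duhamel-plus-local-smoothing scheme does not apply here because the metric perturbation is high order, and moreover no Strichartz (nor the needed local smoothing with the right structure) is known for a genuine Dirac--Coulomb term. As written, your Step 3 would not close.

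Once one sees that $V_n:=V|_{\mathcal P_n}$ is a bounded operator $H^s\to H^s$ with $\|V_n\|\lesssim\langle n\rangle$, the rest is elementary: Proposition~\ref{boundpot} (Duhamel, Christ--Kiselev, Gr\"onwall on a bounded time interval, using that the perturbed flow is bounded on $H^s$ via $e^{it(\mathcal D_{\R^3}+m\beta+V)}=\sigma^{-1}e^{it\mathcal D}\sigma$ and the two multiplication lemmas) immediately yields the local-in-time Strichartz estimates with the $\langle n\rangle$ loss, and the change of measure you correctly wrote converts them into the weighted estimates \eqref{strichartznew1}--\eqref{strichartznew2} on $M$. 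Also note a smaller inaccuracy: the scalar curvature does not appear as a potential in the first-order equation for $v$; it only enters through $\mathcal D^2$ (and is used only in Lemma~\ref{lemma3} to control $\|e^{it\mathcal D}\|_{H^2\to H^2}$). You should also record, as in Lemma~\ref{lemma2}, that multiplication by $\sigma^{\pm1}$ is continuous between the $H^s$ spaces on $\R^3$ and on $M$; this is needed to transfer the $H^{s}$ norm back and forth and is not automatic from the $L^2$ isometry alone.
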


\begin{remark}
Notice that if $\varphi(r)\geq r$, i.e. if the volume element of $M$ grows faster than the one in the Euclidean case, these estimates actually produce a gain in space with respect to the Strichartz estimates in the flat case. In fact, the growth of the term $\varphi(r)\geq r$ can be related to the sign of the curvature of the manifold, and more precisely to the one of the {\em tangential sectional curvature}. Indeed, the tangential component of the sectional curvature $sec_{tan}$ in the setting of spherically symmetric manifolds is given by
$$
sec_{tan}=-\frac{(\varphi')^2-1}{\varphi^2}.
$$
Suppose that this is non-positive for any $r\geq r_0$, for some $r_0$. Then $\varphi'(r)\geq1$ for all $r\geq r_0$ because $\varphi'(r)$ is continuous and $\varphi$ is assumed positive: this rules out the case $\varphi'(r)\leq -1$ for $r\geq r_0$. But then
$$
\varphi(r)-\varphi(r_0)=\int_{r_0}^r\varphi'(s)ds\geq r-r_0
$$
which for $r\rightarrow+\infty$ gives
\begin{equation}\label{ccp}
\frac{\varphi(r)}{r}\geq1.
\end{equation}
Therefore, the negativity of the tangential sectional curvature results in \eqref{ccp}, which ensures in fact a "gain" in estimates \eqref{strichartznew1}-\eqref{strichartznew2}. This fact was already remarked in \cite{banduy}.
\end{remark}

\begin{remark}
As it is often the case when dealing with potential perturbations, the estimates in the massless case need the full non homogeneous Sobolev norm on the initial condition: the $L^2$ norm is indeed needed to control derivatives in the weighted case (see forthcoming Lemma \ref{lemma1}).
\end{remark}

\begin{remark}
As we mentioned, the basic idea of the proof relies, roughly speaking, in using partial wave decomposition to reduce the problem to a radial one, and then introducing suitable weighted spinors so that, in the new variable, the equation becomes a "flat" Dirac equation with a perturbative term that can be seen (and handled) as a potential perturbation. Then, the validity of weighted Strichartz estimates for the dynamics on this curved setting corresponds to the validity of Strichartz estimates for some potential perturbed flow on $\mathbb{R}^3$ (see Proposition \ref{boundpot}).
\end{remark}

\begin{remark}
It is interesting to compare the present situation with the Schr\"odinger equation counterpart  (see in particular Remark 2.7 in \cite{banduy}). The Laplace operator associated to a metric with the form \eqref{sphermet} is indeed, in a generic dimension $n\geq3$, 
$$
\Delta_M=\partial^2_r+\frac{n-1}2\frac{\varphi'(r)}{\varphi(r)}\partial_r+\frac1{\varphi^2(r)}\Delta_{S^{n-1}}.
$$
Therefore, when taking a polynomial-type $\varphi(r)=r^m$, for some integer $m\geq1$, the radial part of the operator above reduces to the radial part of the Laplacian on $\mathbb{R}^N$ with $N=1+m(n-1)\geq n$. If one considers radial solutions then, it is possible to introduce a weighted function such that the restricted radial equation becomes indeed a radial, flat Schr\"odinger equation with a potential which, in general, will be bounded and will have again a scaling critical behaviour at infinity.
\end{remark}
\medskip

By relying on orthogonality and unitarity of spherical harmonics, we can deduce from Theorem \ref{teo1} weighted Strichartz estimates with loss of angular derivatives for generic initial condition. 

We introduce the spaces $H^{a,b}$ for $a,b \in\R$ by defining the norms
$$
\|f\|_{H^{a,b}} = \Big[ \sum_{j,m_j,k_j,\pm}\Big(\an{k_j}^{2b}  \|f_{j,m_j,k_k,\pm}\|_{L^2(\varphi^2(r)dr)}^2 + \|f_{j,m_j,k_k,\pm}\|_{H^{a}(\varphi^2(r)dr)}^2\Big) \Big]^{1/2}
$$
where 
$$
f_{j,m_j,k_j,\pm }  = \an{f,\Phi_{m_j,k_j}^\pm}_{L^2(\mathbb{S}^2)}
$$
with $\{\Phi_{m_j,k_j}^+ , \Phi_{m_j,k_j}^-\}$ an orthonormal basis of $\mathcal H_{j,m_j,k_j}$. In other words, we are taking $a$ derivatives in radial coordinates, $b$ derivatives in angular coordinates, and the $L^2$ norm on the whole manifold.

With these notations, interpolating the estimates in Theorem \ref{teo1} with the conservation of the $L^2$ norm and using Littlewood-Paley theory on the sphere, we get the following.

\begin{corollary}\label{cor1}
Let $g$ be as in \eqref{struct}, $h$ having the structure \eqref{sphermet} and satisfying Assumptions {\bf (A1)}. Let $p,q\in [2,\infty]$ and $a,b \geq 0$. Assume either $p>2$, $b>\frac4{p}$, $\frac1{p} + \frac1{q} = \frac12$ and $\frac2{pa} + \frac{2}{pb} < 1$ or $m\neq 0$, $b\geq \frac{3}{p}$, $\frac2{p} + \frac3{q} = \frac32$ and $\frac1{pa} + \frac2{pb} \leq 1$. Then for any bounded interval $I=(0,T)$, $T>0$, there exists a constant $C_T$ such that the solutions $u$ to \eqref{diraceqcurv} with initial condition $u_0$ such that $ u_0\in H^{a,b}$ satisfy the estimates
\begin{equation}\label{strichartznew1bis}
\left\|u\left(\frac{\varphi(r)}r\right)^{1-\frac2q}\right\|_{L^p_t(I,L^q)}\leq C_T\| u_0\|_{H^{a,b}}.
\end{equation}
\end{corollary}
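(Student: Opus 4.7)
My plan is to reduce to Theorem~\ref{teo1} via the partial wave decomposition of $u_0$, and then recombine the pieces through angular Littlewood-Paley on the sphere together with interpolation against the free conservation $\|u\|_{L^\infty_tL^2(M)}\leq\|u_0\|_{L^2(M)}$. Concretely, I expand $u_0 = \sum_{j,m_j,k_j,\pm}u_{0,j,m_j,k_j,\pm}(r)\,\Phi_{m_j,k_j}^{\pm}$. Each two-dimensional subspace $\mathcal{H}_{j,m_j,k_j}\subseteq\mathcal{P}_n$, $n=j\pm\tfrac12$, is preserved by the Dirac flow, so Theorem~\ref{teo1} applied componentwise gives, writing $\widetilde u:=u\,(\varphi(r)/r)^{1-2/q}$ and with $s=2/p$ (respectively $s=1/p$ in the massive case),
\[
\|\widetilde u_{j,m_j,k_j,\pm}\|_{L^p_tL^q(M)}\leq C_T\,\langle j\rangle\,\|u_{0,j,m_j,k_j,\pm}\|_{H^s(M)}.
\]

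Next I introduce the dyadic angular projector $\Pi_N$ onto $\bigoplus_{j\in[N,2N)}\mathcal{H}_j$, which commutes with the Dirac flow. Since $\mathcal{S}_n\subseteq\mathcal{P}_n$ and conversely every $\mathcal{H}_j$ has angular content supported on spherical harmonic degrees $j\pm\tfrac12$, the $\Pi_N$ are comparable to the standard Littlewood-Paley projectors on $\mathbb{S}^2$ at scale $N$. Stein's spherical square-function estimate combined with Minkowski's inequality (valid because $p,q\geq 2$) yields
\[
\|\widetilde u\|_{L^p_tL^q(M)}^2 \lesssim \sum_N\|\Pi_N\widetilde u\|_{L^p_tL^q(M)}^2,
\]
and summing the componentwise Strichartz estimate over $(m_j,k_j,\pm)$ and over $j$ in a dyadic annulus produces $\|\Pi_N\widetilde u\|_{L^p_tL^q(M)}^2\lesssim\langle N\rangle^2\|\Pi_N u_0\|_{H^s(M)}^2$ for every dyadic $N$.

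The last step, which is the main technical obstacle, consists in absorbing the loss $\langle N\rangle^2$ by the regularity encoded in $H^{a,b}$. I use the splitting $\|\Pi_N u_0\|_{H^s(M)}\lesssim\|\Pi_N u_0\|_{H^s_{\mathrm{rad}}}+\langle N\rangle^s\|\Pi_N u_0\|_{L^2(M)}$, interpolate the radial piece by $\|\Pi_N u_0\|_{H^s_{\mathrm{rad}}}\leq\|\Pi_N u_0\|_{L^2(M)}^{1-s/a}\|\Pi_N u_0\|_{H^a_{\mathrm{rad}}}^{s/a}$, and then sum over $N$ via Hölder's inequality with conjugate exponents $a/s$ and $a/(a-s)$, closing the endpoint by a final interpolation with the $L^2$-conservation. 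The arithmetic of this bookkeeping is precisely what encodes the stated conditions $b>4/p$, $2/(pa)+2/(pb)<1$ in the first case and $b\geq 3/p$, $1/(pa)+2/(pb)\leq 1$ in the second case; the strict versus non-strict inequalities reflect whether a further interpolation with the $L^2$-conservation is needed at the endpoint.
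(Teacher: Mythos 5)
Your outline has the right shape — partial wave decomposition, angular Littlewood--Paley on $\mathbb{S}^2$ combined with Minkowski's inequality (valid since $p,q\geq 2$), and absorption of the angular loss into $H^{a,b}$ — but the central dyadic estimate
$$
\|\Pi_N\tilde u\|_{L^p_tL^q(M)}^2\ \lesssim\ \langle N\rangle^2\,\|\Pi_N u_0\|_{H^s(M)}^2
$$
does not follow from "summing the componentwise Strichartz estimate over $(m_j,k_j,\pm)$ and over $j$ in a dyadic annulus." For $q>2$ the norm $L^p_tL^q(M)$ has no orthogonality in the angular variable, so the only way to combine the roughly $N^2$ two-dimensional blocks $\mathcal H_{j,m_j,k_j}$ with $j\in[N,2N)$ (about $N$ values of $j$, each with $\sim j$ pairs $(m_j,k_j)$) is triangle inequality followed by Cauchy--Schwarz. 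That yields $\|\Pi_N\tilde u\|\lesssim\langle N\rangle^{2}\|\Pi_N u_0\|_{H^s}$, i.e.\ $\langle N\rangle^4$ after squaring, and carrying this extra power through the closing bookkeeping forces conditions on $(a,b)$ strictly worse than those in the Corollary.

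The way to get the genuine $\langle N\rangle$ loss — which is what Lemma \ref{lem-l2} needs — is not to sum over blocks at all, but to run Lemma \ref{lemma1} and Proposition \ref{boundpot} directly on the whole dyadic angular sector $\bigoplus_{j\in[N,2N)}\mathcal H_j$: the operator-norm bound of Lemma \ref{lemma1} on the restriction of $V$ depends only on the size of $|k_j|$, hence is still $\lesssim\langle N\rangle$ on the full annulus, and Proposition \ref{boundpot} then delivers the Strichartz estimate for $\Pi_N$ with a single factor $\langle N\rangle$ and no Cauchy--Schwarz. The paper additionally interpolates each such annular estimate against the $L^\infty_tL^2$ conservation \emph{before} summing (Lemma \ref{lem-interp1}), reducing the loss to $\langle N\rangle^{2/p+\varepsilon}$, which makes the final Young-inequality absorption transparent; your alternative of postponing that interpolation to the very end does reproduce the same thresholds (write $(p,q)$ as a convex combination of $(\infty,2)$ and an admissible $(p_1,q_1)$ with $p_1\downarrow 2$), but only once the annular bound carries the correct single power of $\langle N\rangle$.
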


\begin{remark} It is reasonable to expect that these results can be adapted to the more general setting of warped product manifolds, i.e. manifolds $(M,g)$ with $M=\mathbb{R}_t\times \Sigma$ and $\Sigma$ equipped with the Riemannian metrics
\begin{equation}
d\sigma=dr^2+\varphi(r)^2d\beta
\end{equation}
with some compact geometry $\beta$ which is now not necessarily the sphere. One should get indeed
$$
\mathcal D_h = \gamma^1 \partial_r + \frac1{\varphi(r)} \mathcal D_\beta
$$
with $\mathcal D_\beta $ not depending on $r$. If $\mathcal D_\beta$ is diagonalisable then one can produce the same type of theorems, provided $\beta$ induces a Littlewood-Paley theory. We intend to deal with this problem in forthcoming works. \end{remark}

\begin{remark}
Estimates involving angular regularity have been already widely investigated and exploited in the contest of the flat space. In particular, we mention \cite{machnach} in which is proved a the 3D endpoint Strichartz estimates with an $\varepsilon$-loss of angular regularity for the Dirac and wave equations, and \cite{cac2} (which is actually closer in spirit to the strategy of the present paper), in which the same problem is dealt with, also with the additional presence of small potentials.
\end{remark}

As an application of our estimates, we can prove a local well-posedness result in a subcritical regime for some nonlinear Dirac equations with "radial" initial conditions. In particular, we are interested in the study of the nonlinear Dirac equation in the form
\begin{equation*}
i\gamma^\mu D_\mu u-m u=\lambda \langle \gamma^0 u , u\rangle_{\C^4} u
\end{equation*}
which, after multiplying times $\gamma^0$ can be written in the equivalent way
\begin{equation}\label{nld0}
i\partial_t u+\mathcal{D} u-m \gamma^0u=\lambda \langle \gamma^0u, u\rangle \gamma^0 u
\end{equation}
to isolate the time.
The problem of studying local/global well posedness for equation \eqref{nld} (and, more in general, with polynomial type nonlinearities in the form $|\langle \gamma^0u, u\rangle|^{\frac{p-1}2}  u$, $p\geq 3$) in the flat setting has been addressed by several authors (see \cite{reed, dias,naj,escveg,machnach}). In particular, in \cite{escveg} the authors provided a fairly complete picture of well posedness in the subcritical range. Improvements involving additional angular regularity, exploiting some refined Strichartz estimates, were subsequently given in \cite{machnach}. We should also mention \cite{bejher}, in which the authors proved global well posedness (and scattering) for the cubic nonlinear Dirac equation with data in $H^1$, i.e. in the critical case. 

Here, relying on our new Strichartz estimates, we can prove the following result of local well posedness on a non flat background.

\begin{theorem}\label{teo3} Assume $\inf \frac{\varphi(r)}{r}> 0$ and Assumption A. Let $r >  0$, $r' = \max(r,2)$ and let $s_1 = \frac32 - \frac3{r'}$. Let $a ,b >s_1$ such that $a<2$ and
$$ 
\frac2{r'} \Big( \frac1{a-s_1} + \frac1{b-s_1}\Big) < 1 \textrm{ and }b>\frac1{r'} + \frac32 .
$$
Then, for all $R \geq 0$ there exists $T(R) > 0$ such that for all $u_0 \in H^{a,b}$ with $\|u_0\|_{H^{a,b}} \leq R$, the Cauchy problem
\begin{equation}\label{nld}
\begin{cases}
i \partial_t u - H u =| \langle \beta u,u\rangle|^{\frac{r}2}u,\\
u(0,x)=u_0(x) \in H^{a,b}
\end{cases}
\end{equation}
has a unique solution in $\mathcal C ([-T,T],H^{a,b})$ and the flow hence defined is continuous in the initial datum.
\end{theorem}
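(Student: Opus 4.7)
I would argue by a fixed-point contraction for the Duhamel formulation
\begin{equation*}
\Phi(u)(t) := e^{-itH}u_0 - i\int_0^t e^{-i(t-s)H} F(u(s))\,ds, \qquad F(u) = |\langle\beta u,u\rangle_{\C^4}|^{r/2} u,
\end{equation*}
on a suitable complete metric space $X_T$ built from $\mathcal{C}([-T,T];H^{a,b})$ intersected with the weighted mixed-Lebesgue norm on the left-hand side of \eqref{strichartznew1bis}. The assumption $\inf \varphi(r)/r > 0$ is used so that the weight in Corollary \ref{cor1} does not degenerate and one truly recovers, up to constants, a genuine control on $\|u\|_{L^p_t L^q(M)}$.

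The first step is the choice of Strichartz exponents. The condition $b > \frac{1}{r'} + \frac{3}{2}$ guarantees that $H^b(\mathbb{S}^2)$ is a Banach algebra embedded into $L^\infty(\mathbb{S}^2)$, and the strict inequality $\frac{2}{r'}(\frac{1}{a-s_1}+\frac{1}{b-s_1}) < 1$ expresses exactly that $a,b$ leave some room above the Sobolev threshold $s_1$ needed to embed $H^{a,b}$ into the $L^{r+1}$-type space dual to the Strichartz exponent. One picks $(p,q)$ admissible in Corollary \ref{cor1} so that, through this Sobolev embedding, the linear propagator obeys
\begin{equation*}
\|e^{-itH}u_0\|_{X_T} \leq C_T \|u_0\|_{H^{a,b}},
\end{equation*}
and the inhomogeneous estimate reduces by duality to controlling $F(u)$ in the appropriate dual Strichartz norm $N_T$.

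The second step is the nonlinear estimate. Since pointwise $|F(u)|\lesssim |u|^{r+1}$, Hölder in time on $[-T,T]$ together with the chosen Sobolev embedding yields
\begin{equation*}
\|F(u)\|_{N_T} \lesssim T^{\delta}\,\|u\|_{X_T}^{r+1}
\end{equation*}
with some $\delta>0$ coming from the slack in the strict inequality above. To promote this to the full $H^{a,b}$ norm required by the propagator estimate, I would apply, on each time slice, a fractional Moser inequality: in the angular variable, the algebra structure of $H^b(\mathbb{S}^2)$ and its embedding in $L^\infty$ (since $b>3/2$) gives $\|F(u)\|_{H^b_\omega} \lesssim \|u\|_{L^\infty_\omega}^r \|u\|_{H^b_\omega}$; in the radial variable, the restriction $s_1<a<2$ allows a classical fractional chain rule $\|F(u)\|_{H^a_{rad}} \lesssim \|u\|_{L^\infty}^{r}\|u\|_{H^a_{rad}}$. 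Combining and using the embedding $H^{a,b}\hookrightarrow L^\infty(M)$ (which follows from $b>3/2+1/r'$) closes the estimate at the $H^{a,b}$ level.

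Putting the pieces together yields the standard contraction inequalities
\begin{equation*}
\|\Phi(u)\|_{X_T} \leq C_T R + C_T T^\delta\|u\|_{X_T}^{r+1}, \quad \|\Phi(u)-\Phi(v)\|_{X_T} \leq C_T T^\delta\bigl(\|u\|_{X_T}^r+\|v\|_{X_T}^r\bigr)\|u-v\|_{X_T},
\end{equation*}
so choosing $T=T(R)$ small enough makes $\Phi$ a contraction on a ball of radius $2C_T R$ in $X_T$; the fixed point is the desired solution, and continuity of the flow in the datum follows from the same two inequalities applied to differences. The main obstacle is the compatibility issue in the nonlinear step: Corollary \ref{cor1} is proved through the partial wave decomposition $\bigoplus_n \mathcal{P}_n$, which is orthogonal for the linear flow but is \emph{not} preserved by the pointwise nonlinearity $F(u)$. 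Carefully verifying that the mixed radial/angular Moser estimates above are compatible with the weighted space-time norm of Corollary \ref{cor1}---in particular, that one can sum the $\langle n\rangle^{2b}$-weighted angular estimates against the nonlinear coupling between wave subspaces---is the real technical heart of the argument, and dictates both the algebra threshold $b>3/2+1/r'$ and the upper bound $a<2$.
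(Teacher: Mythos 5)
Your overall architecture (Duhamel plus a contraction in a space combining $\mathcal C([-T,T];H^{a,b})$ with a spacetime Strichartz component, closing the nonlinear estimate by a Leibniz rule $\||u|^r u\|_{H^{a,b}}\lesssim\|u\|_{L^\infty}^r\|u\|_{H^{a,b}}$ and H\"older in time) matches the paper's. But the pivotal step — how you get the $L^\infty$ control — contains a genuine error, and the concern you flag as ``the real technical heart'' is not the issue.

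You claim $H^{a,b}\hookrightarrow L^\infty(M)$ follows from $b>\tfrac32+\tfrac1{r'}$. This is false in the range of $a$ the theorem allows. The theorem only imposes $a>s_c=\tfrac32-\tfrac1{r'}$ and $a<2$, so $a$ can be strictly below $3/2$. Near $r=0$ the radial measure behaves like $r^2\,dr$, and a radial function with $H^a$ regularity in $r^2\,dr$ need not be bounded near the origin unless $a\ge 3/2$; large angular regularity $b$ cannot compensate. Your Moser step therefore does not close at the $H^{a,b}$ level, and the fixed-point argument as you present it does not go through.

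The paper avoids this by never invoking $H^{a,b}\hookrightarrow L^\infty$. Instead, it first upgrades the Strichartz estimate of Corollary~\ref{cor1} to one with $\kappa$ spatial derivatives (Lemma~\ref{lem-striwithderm0}): interpolating the $L^{p_1}_tL^{q_1}$ Strichartz bound against the $L^\infty_t H^2$ energy bound gives
$$
\|e^{it\mathcal D}u_0\|_{L^p_t(I)W^{\kappa,q}(M)}\lesssim\|u_0\|_{H^{a,b}}.
$$
Then the $L^\infty$ control comes from the three-dimensional Sobolev embedding $W^{\kappa,q}(M)\hookrightarrow L^\infty(M)$, which holds because one chooses $\kappa>\tfrac3q$. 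Accordingly the resolution space must be $X(T)=\mathcal C([-T,T],H^{a,b})\cap L^p_T W^{\kappa,q}(M)$ — not $L^p_T L^q$ as in your sketch — and the nonlinear estimate uses $\|u\|_{L^\infty}\lesssim\|u\|_{W^{\kappa,q}(M)}$ together with H\"older in time (exploiting $p>r'$) to produce the contraction factor $T^{(p-r)/p}$. The numerical hypotheses you attribute to an angular algebra property and to wave-subspace coupling are in fact used to guarantee the existence of exponents $p>r'$, $q$, and $\kappa\in(\tfrac3q,\tfrac4q)$ compatible with Lemma~\ref{lem-striwithderm0}; the upper bound $a<2$ is needed because that interpolation (and the propagator continuity on $H^{a,b}$) goes up to the $H^2$ energy level. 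The compatibility of $H^{a,b}$ Leibniz with the partial-wave decomposition is not where the difficulty lies — the $H^{a,b}$ norm is a genuine mixed Sobolev norm on which the standard tensor-product Leibniz estimate applies without any interaction analysis between the $\mathcal P_n$ blocks.
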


\begin{remark} Note that the condition on $a$ means that $a$ must be taken strictly bigger than the saling-critical regularity  $s_c = \frac32 - \frac1{r'} >s_1$ of the equation in the flat case.
\end{remark}

\begin{remark} Taking the initial datum $u_0$ in $H^a((0,+\infty)\varphi(r)^2dr)\otimes\mathcal{H}_j$, we have that $u_0$ belongs to $H^{a,b}$ for any $b$, and thus the equation admits a unique local solution in $\mathcal C([-T,T],H^{a,b})$ for any $b$ (though the time of existence depends on $b$) and the flow is continuous in the initial datum on $H^a((0,+\infty)\varphi(r)^2dr)\otimes\mathcal{H}_j$.
\end{remark}

We also have the following theorem for "radial" data.

\begin{theorem}\label{corsolmod} Assume $\inf \frac{\varphi(r)}{r} > 0$ and Assumption A. 
 Let $r>0$, $r' = \max(2,r)$ and $s_c = \frac32 - \frac1{r'}$. Take $(m_{1/2},k_{1/2})$ in $\{(-1/2,-1), (-1/2,1), (1/2,-1), (1/2,1)\}$. Assume $a>s_c$ then for all $R \geq 0$ there exists $T(R) > 0$ such that for all $u_0\in H^a((0,+\infty)\varphi(r)^2dr)\otimes\mathcal{H}_{1/2,m_{1/2},k_{1/2}}$ with $\|u_0\|_{H^{a}} \leq R$, the Cauchy problem
\begin{equation}
\begin{cases}
i \partial_t u - H u =| \langle \beta u,u\rangle|^{\frac{r}2}u\\
u(0,x)=u_0(x) \in H^{a}\otimes\mathcal{H}_{1/2,m_{1/2},k_{1/2}}
\end{cases}
\end{equation}
has a unique solution in $\mathcal C ([-T,T],H^{a}\otimes\mathcal{H}_{1/2,m_{1/2},k_{1/2}})$ and the flow hence defined is continuous in the initial datum. \end{theorem}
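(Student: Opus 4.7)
The plan is a standard Banach fixed-point argument on the Duhamel formulation
\[
u(t) = e^{-itH} u_0 - i \int_0^t e^{-i(t-s) H} F(u(s))\, ds, \qquad F(u) = |\langle \beta u, u\rangle|^{r/2} u,
\]
carried out in a closed ball of a suitable Strichartz space built from Theorem \ref{teo1}. The decisive simplification compared to Theorem \ref{teo3} is that the subspace $\mathcal R := H^a((0,+\infty),\varphi(r)^2 dr) \otimes \mathcal{H}_{1/2,m_{1/2},k_{1/2}}$ is invariant under the full nonlinear flow, so there is no loss of angular regularity and $a$ may be taken just above the scaling-critical $s_c$.

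To establish the invariance of $\mathcal R$, first note that $e^{-itH}$ preserves every partial wave subspace $\mathcal{H}_{j,m_j,k_j}$ by the decomposition recalled in Section \ref{radsec}. For the nonlinearity $F$, the key claim is that $\langle \beta u,u\rangle_{\C^4}$ is a radial function whenever $u\in\mathcal R$. Writing $u = f_+(r)\Phi^+ + f_-(r)\Phi^-$ in the basis of $\mathcal{H}_{1/2,m_{1/2},k_{1/2}}$ described in Subsection 4.6.4 of \cite{thaller}, the block-diagonal form of $\beta$ combined with the orthogonality of $\Phi^+,\Phi^-$ reduces the bracket to $|f_+(r)|^2 |\Phi^+|_{\C^4}^2 - |f_-(r)|^2 |\Phi^-|_{\C^4}^2$. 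For $j=1/2$, and only there, the pointwise squared norms $|\Phi^\pm|_{\C^4}^2$ are constants on $\mathbb S^2$: the spherical spinors satisfy $|\chi_{1/2,m_{1/2}}^{l}(\omega)|^2_{\C^2}\equiv\frac1{4\pi}$ for $l\in\{0,1\}$ by a direct Clebsch--Gordan computation, which is the degenerate limit of the sum rule $\sum_{m_j}|\chi_{j,m_j}^{l}|^2=(2j+1)/(4\pi)$. Hence $\langle \beta u,u\rangle$ depends only on $r$ and so does $|\langle\beta u,u\rangle|^{r/2}$, which preserves the angular structure of $u$, i.e.\ $F(u)\in\mathcal R$.

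With invariance in hand, apply Theorem \ref{teo1} with $n=0$: since $\mathcal{H}_{1/2}=\mathcal P_0$, the prefactor $\langle n\rangle$ disappears, and the hypothesis $\inf_{r>0}\varphi(r)/r>0$ bounds the weight $(\varphi(r)/r)^{1-2/q}$ from below by a positive constant, so the weighted Strichartz bounds \eqref{strichartznew1}--\eqref{strichartznew2} convert to the plain estimate $\|e^{-itH}u_0\|_{L^p_t(I)L^q(M)}\leq C_T\|u_0\|_{H^s(M)}$ on data in $\mathcal R$ for every admissible triple $(p,q,s)$ of that theorem. Choose $s$ with $s_c<s\leq a$ (possible by the strict subcriticality $a>s_c$) together with a Strichartz pair $(p,q)$ tuned to the nonlinearity of power $r+1$. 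A fractional chain rule for $z\mapsto|z|^{r/2}z$ applied in the radial variable, combined with Sobolev embedding on the one-dimensional radial factor with weight $\varphi(r)^2\,dr$, then yields the usual subcritical estimate
\[
\|F(u)-F(v)\|_{L^1_t(I)H^a}\leq C\,T^{\theta}\bigl(\|u\|_X^r+\|v\|_X^r\bigr)\|u-v\|_X,
\]
with $X=\mathcal C(I,H^a)\cap L^p_t(I)L^q(M)$ and $\theta>0$ coming from the gap $a-s_c>0$. Contraction on a small ball of $X$ produces the unique local solution, and the Lipschitz dependence on the initial datum in $X$ (and hence in $\mathcal C([-T,T],H^a\otimes\mathcal{H}_{1/2,m_{1/2},k_{1/2}})$) follows by the same estimate.

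The main obstacle is the algebraic step inside the invariance argument: checking that for every admissible pair $(m_{1/2},k_{1/2})$ the pointwise squared norm of the relevant spherical spinor really is a constant and that the cross bracket $\langle \beta\Phi^-,\Phi^+\rangle_{\C^4}$ vanishes. This is precisely what singles out $j=1/2$, since at higher angular momenta these quantities cease to be radial and $\mathcal R$ fails to be invariant, forcing the more general treatment of Theorem \ref{teo3}. Once this identity is established, the remaining analysis is a routine adaptation of the flat-space subcritical contraction scheme, with the weighted Strichartz estimate of Theorem \ref{teo1} as the only nontrivial analytic input.
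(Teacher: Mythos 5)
Your proof follows the same strategy as the paper's, which dispatches Theorem \ref{corsolmod} as an immediate corollary of Theorem \ref{teo3} combined with Lemma \ref{1/2}: the remark after Theorem \ref{teo3} notes that $u_0\in H^a\otimes\mathcal{H}_{1/2,m_{1/2},k_{1/2}}$ lies in $H^{a,b}$ for every $b$, so one may send $b\to\infty$, at which point the condition $\tfrac2{r'}\bigl(\tfrac1{a-s_1}+\tfrac1{b-s_1}\bigr)<1$ reduces to $a>s_1+\tfrac2{r'}=s_c$, and Lemma \ref{1/2} then confines the whole Picard iteration to the fixed partial wave subspace. You re-establish the invariance directly via the pointwise orthogonality of $\Phi^+,\Phi^-$ (they occupy complementary $\C^2$-blocks) and the identity $|\Phi^\pm|_{\C^4}^2\equiv\tfrac1{4\pi}$, which is the same computation as Lemma \ref{1/2} packaged via the Clebsch--Gordan sum rule rather than written out coordinate-by-coordinate. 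You also correctly observe that $\mathcal{H}_{1/2}=\mathcal P_0$, so the $\langle n\rangle$ prefactor in Theorem \ref{teo1} is harmless, and that $\inf\varphi(r)/r>0$ removes the weight $(\varphi(r)/r)^{1-2/q}$. Re-running the contraction from scratch instead of citing Theorem \ref{teo3} is valid, if slightly less economical.

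Two steps in your sketch need to be supplied to actually close the argument. First, Theorem \ref{teo1} yields $L^p_tL^q$ estimates only at regularity $s=2/p$ (or $s=1/p$), with $s\in[0,1]$; to estimate the nonlinearity at the level $H^a$ with $a>s_c\geq1$ you need the derivative-carrying bound $\|S(t)u_0\|_{L^p_tW^{\kappa,q}(M)}\lesssim\|u_0\|_{H^a}$, which the paper obtains by interpolating Theorem \ref{teo1} against the $H^2$-conservation of the flow (this is exactly Lemma \ref{lem-striwithderm0}); your proposal does not mention this interpolation. Second, the phrase ``Sobolev embedding on the one-dimensional radial factor with weight $\varphi(r)^2\,dr$'' is misleading: the volume element still scales like $r^2\,dr$ near the origin, so the relevant embedding is $W^{\kappa,q}(M)\hookrightarrow L^\infty(M)$ with $\kappa>3/q$, the same three-dimensional constraint used in the proof of Theorem \ref{teo3}; the angular reduction fixes the spinor basis but does not lower the Sobolev exponent needed for $L^\infty$. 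Neither point is a false step, but both must be spelled out to reach the threshold $a>s_c$ you claim.
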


\begin{remark}  In particular the {\em Soler model}, that corresponds to the choice $r=2$, is locally well-posed in $H^a$ for $a> s_c = 1$.  \end{remark}

\begin{remark}
We stress the fact that the idea of relying on partial wave subspaces to define a nonlinear Dirac equation (with somehow improved results of well posedness) is not new and has been already exploited to give some partial results in the 3D cubic (flat) case (see \cite{machnach}), also in presence of external potentials (see \cite{cac1,cac2}). Indeed, the main advantage is in that the nonlinear dynamics preserves the partial wave decomposition provided the initial condition has prescribed angular component, more precisely its angular part belongs to one of the four $\mathcal{H}_{1/2}$ spaces.
\end{remark}


\section{Preliminaries: the Dirac equation on $\mathbb{R}^3$}

In this section we recall some results on the dispersive dynamics of the Dirac equation with potentials in the Euclidean setting; in particular, we show that local in time Strichartz estimates hold true in presence of bounded perturbations.

\subsection{Strichartz estimates}
Strichartz estimates for the Dirac equation in the Euclidean setting, both in the massless and massive case, are well known and in view of \eqref{quaddir} can be easily deduced by the corresponding ones for the wave and Klein-Gordon equations. We recall indeed that the solutions to the 3-dimensional Dirac equation
\begin{equation}\label{dirfree}
\begin{cases}
\displaystyle
 i\partial_tu+\mathcal{D}u+m\beta u=0,\quad u(t,x):\mathbb{R}_t\times\mathbb{R}_x^n\rightarrow\mathbb{C}^{N}\\
u(0,x)=u_0(x)
\end{cases}
\end{equation}
satisfy the following families of 

{\bf Strichartz estimates (S)}
\begin{itemize}
\item {\em Case $m=0$}: 
$$\|e^{it\mathcal{D}}u_0\|_{L^p_t(I)L^q(\mathbb{R}^3)}\lesssim \||D|^{\frac2p}u_0\|_{L^2(\R^3)}$$
$$
\left\||D|^{-\frac2p}\int_0^te^{i(t-s)\mathcal{D}}F\right\|_{L^p_t(I)L^q(\mathbb{R}^3)}\lesssim \||D|^{\frac2{\tilde{p}}}F\|_{L^{\tilde{p}'}L^{\tilde{q}'}(\R^3)}
$$
provided both $(p,q)$ and $(\tilde{p},\tilde{q})$ satisfy the admissibility condition
\begin{equation}\label{wad}
\displaystyle \frac2p+\frac2q=1,\qquad2\leq p\leq\infty,\quad2\leq q<\infty
\end{equation}
holds.
\item
{\em Case $m\neq 0$}
$$\|e^{it(\mathcal{D}+\beta)}u_0\|_{L^p_t(I)L^q(\mathbb{R}^3)}\lesssim \|\langle D\rangle^{\frac1p}u_0\|_{L^2}
$$
$$
\left\|\langle D\rangle^{-\frac1p}\int_0^te^{i(t-s)\mathcal{D}}F\right\|_{L^p_t(I)L^q(\mathbb{R}^3)}\lesssim \|\langle D\rangle^{\frac1{\tilde{p}}}F\|_{L^{\tilde{p}'}L^{\tilde{q}'}(\R^3)}
$$
provided both $(p,q)$ and $(\tilde{p},\tilde{q})$ satisfy the admissibility condition
\begin{equation}\label{sad}
\displaystyle \frac2p+\frac3q=\frac32,\qquad2\leq p\leq\infty,\quad2\leq q\leq6
\end{equation}
holds.
\end{itemize}

Notice that the time interval $I$ can be bounded or unbounded. 

\begin{remark}
Using the fact that the wave flow commutes with Fourier multiplier, it is of course possible to move (some of) the derivatives on the initial data on the left hand side in estimates above. In particular, thanks to Sobolev inequalities, the estimate in the case $m\neq 0$ implies the estimate in the case $m=0$ and thus, we have the first one for any $m\in \R$.
\end{remark}

\begin{remark}
The problem of studying dispersive, and in particular global Strichartz, estimates for potential perturbations of the Dirac equation is quite well investigated. Indeed, it is now understood that, essentially, \emph{subcritical} (with respect to scaling) potentials do not provide any obstruction to dispersion: more precisely (see \cite{danfan}), the flow $e^{it(\mathcal{D}+V)}$ satisfies the same family of Strichartz estimates as in the free case as long as
$$
|V(x)|\leq \frac{\delta}{w_\kappa(x)}
$$
with $w_\kappa(x)=|x|(1+|\log|x||)^\kappa$, $\kappa>1$ and $\delta$ sufficiently small. Refined results can be obtained if one deals with radial potentials (see \cite{cacdan}), so that one can take into account angular regularity as well. When the perturbation becomes scaling critical, that is the case of the Coulomb potential $V(x)\cong 1/|x|$, the situation becomes considerably more complicated and it is not known whether the corresponding dynamics preserves Strichartz estimates or not: to the best of our knowledge, the only available result in this direction is \cite{cacser}, in which the authors were only able to prove a suitable family of  local smoothing estimates. It is worth noticing that this fact provides some major difference with respect to the Schr\"odinger (and wave) equations, for which scaling critical electric-potential perturbations, that are represented by inverse-square potentials, are known not to alter the dispersive dynamics. This difference might be understood by means of formula \eqref{quaddir}: this suggests indeed that the Dirac-Coulomb model should behave much closer to a system of wave equations with a scaling-critical first order perturbation, for which, as far as we know, no results are available.
\end{remark}

While proving global-in-time Strichartz estimates for a potential perturbation of the flow might require some technical tool, due essentially to the loss of derivatives in the free estimates that does not allow to directly rely on the $TT^*$ method, local-in-time estimates are much easier to obtain for "small" potentials. We prove the following

\begin{proposition}\label{boundpot}
Let $V$ be a continuous operator from $L^2(\R^3)$ to $L^2(\R^3)$ which is also continuous from $H^{1}$ to itself, $m\geq0$, $I$ a bounded time interval. Assume that the flow $S(t) = e^{it(\mathcal{D}_{\R^3}+m\beta+V)}$ is continuous from $H^{s}$ to $\mathcal C(\R,H^s)$ for $s\in [0,1]$ then for all $p,q,s \in [2,\infty]^2\times \R_+$ satisfying $\frac1{p} + \frac1{q} = 1$, $p>2$, $s=\frac2{p}$ or $m\neq 0$, $\frac2{p} + \frac3{q} = \frac32$, $s= \frac1{p}$, we have for all $u_0 \in H^{s}$,
$$
\|S(t) u_0\|_{L^p_t(I)L^q(\mathbb{R}^3)} \leq C(I) (1 + \|V\|_{H^{s} \rightarrow H^{s}} ) ( 1+ \|S(t)\|_{H^s \rightarrow L^\infty, H^s}) \|u_0\|_{H^s}.
$$ 
\end{proposition}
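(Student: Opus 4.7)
The approach is the standard perturbative one based on Duhamel's formula, combined with the free homogeneous and dual (inhomogeneous) Strichartz estimates (S). I would write
\[
S(t) u_0 \;=\; e^{it(\mathcal{D}+m\beta)} u_0 \;-\; i \int_0^t e^{i(t-\tau)(\mathcal{D}+m\beta)}\, V\, S(\tau) u_0 \, d\tau,
\]
and estimate the two terms separately. For the first one, the homogeneous estimate in (S) together with $\|\cdot\|_{\dot H^s} \leq \|\cdot\|_{H^s}$ for $s \geq 0$ (the massless case) and the fact that the massive estimate is already written in terms of $\langle D\rangle^{1/p}$ gives at once
\[
\|e^{it(\mathcal{D}+m\beta)} u_0\|_{L^p_t(I) L^q(\R^3)} \;\lesssim\; \|u_0\|_{H^s}.
\]

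The key ingredient for the Duhamel term is to choose the dual admissible pair $(\tilde p,\tilde q) = (\infty, 2)$, which simultaneously satisfies the wave admissibility \eqref{wad} and, when $m \neq 0$, the Klein--Gordon admissibility \eqref{sad}. Since $|D|$ and $\langle D\rangle$ commute with $e^{it(\mathcal{D}+m\beta)}$, moving the $|D|^{-2/p}$ (respectively $\langle D\rangle^{-1/p}$) factor from the left of the inhomogeneous Strichartz estimate onto the source on the right and using $\|\cdot\|_{\dot H^s} \leq \|\cdot\|_{H^s}$, yields
\[
\left\|\int_0^t e^{i(t-\tau)(\mathcal{D}+m\beta)}\, V\, S(\tau) u_0 \, d\tau \right\|_{L^p_t(I) L^q} \;\lesssim\; \|V\, S(\cdot) u_0\|_{L^1_t(I)\, H^s}.
\]

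Finally, complex interpolation between the assumed $L^2 \to L^2$ and $H^1 \to H^1$ bounds on $V$ yields that $V$ is bounded on $H^s$ for every $s \in [0,1]$, and combining with the hypothesis that $S(\cdot)$ maps $H^s$ into $L^\infty_t(I, H^s)$ and the finiteness of $|I|$ gives
\[
\|V\, S(\cdot) u_0\|_{L^1_t(I) H^s} \;\leq\; |I|\cdot \|V\|_{H^s \to H^s}\cdot \|S(\cdot)\|_{H^s \to L^\infty_t H^s}\cdot \|u_0\|_{H^s}.
\]
Summing the homogeneous and Duhamel contributions and absorbing $|I|$ into $C(I)$ produces exactly the stated inequality. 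There is no real obstacle: the method crudely pushes the perturbation through $L^\infty_t$ and pays the time length, which is why no smallness condition on $V$ is needed, but also why the restriction to a bounded interval $I$ is essential and global-in-time estimates would require finer (e.g.\ spectral) information on $V$.
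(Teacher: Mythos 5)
Your proposal is correct and follows the same route as the paper: Duhamel's formula, free Strichartz estimates applied to the homogeneous term, the dual endpoint $(\tilde p,\tilde q)=(\infty,2)$ to land the source in $L^1_t H^s$, interpolation of the $L^2$ and $H^1$ bounds on $V$, and the assumed $L^\infty_t H^s$ control of the perturbed flow. The only cosmetic difference is that the paper invokes the Christ--Kiselev lemma (unnecessary at the $\tilde p'=1$ endpoint, since Minkowski's inequality plus the homogeneous estimate already gives the retarded bound, as your presentation implicitly reflects) and leaves the interpolation step for $V$ implicit where you spell it out.
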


\begin{proof} In this proof, we write $\mathcal D$ for $\mathcal D_{\R^3}$.

We show the case when $m=0$, the other one being completely analogous.

We use Duhamel formula to represent the solution $u=e^{it(\mathcal{D}+m\beta+V)}u_0$ and take any admissible Strichartz norm. We have that
$$
u(t) = e^{it\mathcal D} u_0 -i \int_{0}^t e^{i(t-\tau) \mathcal D } (Vu)(\tau) d\tau.
$$
We use Strichartz estimates and Christ-Kiselev lemma to get
$$
\|u\|_{L^p_t(I)L^q(\mathbb{R}^3)} \lesssim \|u_0\|_{H^{2/p}} + \|Vu\|_{L^1(I)H^{2/p}(\R^3)}.
$$
We use that $V$ is continuous from $H^{2/p}$ to itself to get
$$
\|u\|_{L^p_t(I)L^q(\mathbb{R}^3)} \lesssim \|u_0\|_{H^{2/p}} + |I| \|V\|_{H^{2/p} \rightarrow H^{2/p}} \|u\|_{L^\infty(I)H^{2/p}(\R^3)}.
$$
We use the continuity of $S(t)$ from $H^{2/p}$ to $\mathcal C(\R,H^{2/p})$ to get
$$
\|u\|_{L^p_t(I)L^q(\mathbb{R}^3)} \lesssim \|u_0\|_{H^{2/p}} + |I| \|V\|_{H^{2/p} \rightarrow H^{2/p}} \|S(t)\|_{H^{2/p}\rightarrow L^\infty,H^{2/p}} \|u_0\|_{H^{2/p}}.
$$
This concludes the proof.

\end{proof}

\begin{remark}
Any $V$ which is the multiplication by a $W^{1,\infty}$ map is an admissible choice.
\end{remark}

\section{The radial Dirac Equation on symmetric manifolds}\label{radsec}

We devote this section to show how the Dirac equation writes in spherically symmetric manifolds, and how the introduction of weighted spinors transforms the equation into an equation with potential on Minkowski space. In this section, $\psi$ denotes a solution to the linear Dirac equation in a spherically symmetric manifold.

\subsection{The Dirac operator in spherical coordinates}

The construction of the Dirac operator on a 4D manifold is a delicate task, and requires the introduction of the so called \emph{vierbein} which, essentially, define some proper frames that connect the metrics of the manifold $(M,g)$ to the Monkowski one $\eta$; details can be found in the predecessor of this paper, \cite{cacdes1}, and in \cite{parktoms}. Anyway, when the metrics has the particular structure \eqref{sphermet} it is possible to write some explicit formulas by using spherical coordinates (similar calculations were developed in \cite{daude1}). First, let us recall that the Dirac equation can be written in the general form
\begin{equation}\label{gendir}
(\underline\gamma^jD_j-m)\psi=0
\end{equation}
where $m\geq0$ is the mass, the $\underline\gamma^j$ matrices are an adaptation of the standard one, i.e. they are a set of matrices satisfying the anticommuting relation
$$
\{\underline\gamma^i,\underline\gamma^j\}=2g^{ij},
$$
and can be written using the standard ones as $\underline\gamma^j=e^j_{\; a}\gamma^a$ where $e^j_{\; a}$ is a vierbein for $M,g$ and the $\gamma^a$ are the standard gamma matrices satisfying
\begin{equation}\label{flatanti}
\{\gamma^i,\gamma^j\}=2\eta^{ij}.
\end{equation}
Classicaly, one takes
\begin{equation}
\gamma^0=\left(\begin{array}{cc}\sigma_0 & 0 \\0 & -\sigma_0\end{array}\right),\qquad
\gamma^j=\left(\begin{array}{cc}0 & \sigma_j\\-\sigma_j & 0\end{array}\right),\quad
\end{equation}
where the $\sigma$ matrices are the well known Pauli matrices
\begin{equation}
\sigma_0=\left(\begin{array}{cc}1 & 0 \\0 & 1\end{array}\right),\quad
\sigma_1=\left(\begin{array}{cc}0 & 1 \\1 & 0\end{array}\right),\quad
\sigma_2=\left(\begin{array}{cc}0 &-i \\i & 0\end{array}\right),\quad
\sigma_3=\left(\begin{array}{cc}1 & 0\\0 & -1\end{array}\right).
\end{equation}
The differential operator $D_j$ is the covariant derivative for spinors, and it is defined as $D_j=\partial_j+\Gamma_j$ where $\Gamma_j$ is given by 
\begin{equation}\label{spinconn}
\Gamma_j=\frac18\omega_j^{\; ab}[\gamma_a,\gamma_b]
\end{equation}
which contains a purely algebraic part $[\gamma_a,\gamma_b]$ that corresponds to the generators of the underlying Lie algebra for Dirac bi-spinors, and a purely geometric one $\omega_j^{\; ab}$, namely the spin connection. It is given by
$$
\omega_j^{\; ab} = e_i^{\; a} \Gamma^i_{\; jk}  e^{kb} + e_i^{\; a} \partial_j e^{ib}
$$
and is characterized by the formula
\begin{equation}\label{carom}
de^{ a} + \omega^a_{\; b}\wedge e^{ b} = 0
\end{equation}
where $e^{ a} = e_j^{\; a} dx^j$ and $\omega^a_{\; b} = \omega_{j \; b}^{\; a} dx^j$. 

In our assumption on the metrics \eqref{sphermet}, by using spherical coordinates, it is natural to choose the dreibein (which connects the flat metrics to the spatial metrics $h$) :
$$
e_1=\partial_r,\qquad e_2=\frac1{\varphi(r)}\partial_\theta,\qquad e_3=\frac1{\varphi(r)\sin\theta} \partial_\phi,
$$
that is $e_j^{\; a}$ is the matrix
$$
\begin{pmatrix} 1 &0 &0 \\ 0 & \varphi(r) & 0 \\ 0&0 & \varphi(r) \sin \theta \end{pmatrix}.
$$
Thus, the associated dual 1-forms are
$$
e^1=dr,\qquad e^2=\varphi(r)d\theta,\qquad e^3=\varphi(r)\sin\theta d\phi
$$
We can write the exterior derivatives of these forms to be 
$$
de^1=0,\qquad de^2=\frac{\varphi'(r)}{\varphi(r)}e^1\wedge e^2,\qquad de^3=\frac{\varphi'(r)}{\varphi(r)}e^1\wedge e^3+\frac{\cot(\theta)}{\varphi(r)} e^2\wedge e^3.
$$
Given the caractherization of $\omega$, \eqref{carom},
one finds the explicit formulas
$$
\omega^1_{\; 2}=-\frac{\varphi'(r)}{\varphi(r)}e^2,\qquad \omega^1_{\; 3}=-\frac{\varphi'(r)}{\varphi(r)}e^3,\qquad
\omega^2_{\; 3}=-\frac{\cot(\theta)}{\varphi(r)}e^3.
$$
In terms of coordinates, this gives,
$$
\begin{array}{cc}
\omega_1^{\; ab} = 0  & \textrm{ for all }ab\\
\omega_2^{\; 12} = -\varphi'(r) & \\
\omega_2^{\; ab} = 0 & \textrm{ if } ab \neq 12 \textrm{ and }21\\
\omega_3^{\; 23} = - \cos \theta & \\
\omega_3^{\; 13} = - \varphi'(r) \sin \theta & \\
\omega_3^{\; ab}  = 0 & \textrm{ if } a=b \textrm{ or } ab = 12 \textrm{ or }21. \end{array}
$$

Therefore, after recalling \eqref{spinconn}, one can write 
\begin{eqnarray*}
\Gamma_1 & =& 0 \\  
\Gamma_2 &=& -\frac12 \varphi'(r) \gamma_1\gamma_2  \\
\Gamma_3  &=& -\frac12 \Big( \cos \theta \gamma_2 \gamma_3 + \varphi'(r) \sin\theta \gamma_1 \gamma_3\Big) 
\end{eqnarray*}
which in turns implies
\begin{eqnarray*}
D_1 &=& \partial_r \\
D_2 &=& \partial_\theta - \frac12 \varphi'(r) \gamma_1\gamma_2 \\
D_3 & = &\partial_\phi -\frac12 \Big( \cos \theta \gamma_2 \gamma_3 + \varphi'(r) \sin\theta \gamma_1 \gamma_3\Big).
\end{eqnarray*}

We have
$$
 e^j_{\; a} \gamma^a D_j  = \gamma^1 D_1 + \frac1{\varphi(r)} \gamma^2 D_2 + \frac1{\varphi(r)\sin\theta} \gamma^3 D_3.
$$
Using anti-commutation rules between the $\gamma^a$s, we get that $e^{j}_{\; a } \gamma^a D_j$ is equal to
$$
 \gamma^1 \partial_r + \frac1{\varphi(r)} \Big( \gamma^2 \partial_\theta + \gamma^1 \frac{\varphi'(r)}{2} \Big) + \frac1{\varphi(r) \sin\theta} \Big(\gamma^3 \partial_\phi + \gamma^2 \frac{\cos \theta}{2} + \gamma^1 \frac{\sin \theta \varphi'(r)}{2} \Big).
 $$
Rearranging the sum, we can rewrite equation \eqref{gendir} as
\begin{equation*}
\left[
\gamma^0\partial_t+\gamma^1\left(\partial_r+\frac{\varphi'(r)}{\varphi(r)}\right)+\frac1{\varphi(r)}
\left(\left(\partial_\theta+\frac{\cot\theta}2\right)\gamma^2+\frac1{\sin\theta}\partial_\phi\gamma^3\right)
-m\right]\psi=0.
\end{equation*}
It is helpful to rewrite the equation above in the Hamiltonian form: multiplying it times $i\gamma^0$ yields
$$
i\partial_t\psi=(\mathcal{D}+\gamma^0m)\psi
$$
where the Dirac operator is now written as
$$
\mathcal{D}=-i\gamma^0\gamma^1\left(\partial_r+\frac{\varphi'(r)}{\varphi(r)}\right)+\frac1{\varphi(r)}\left[\left(-i\partial_\theta-\frac{i\cot(\theta)}2\right)\gamma^0\gamma^2-\frac{i}{\sin\theta}\partial_\phi\gamma^0\gamma^3\right].
$$
To simplify the notations, we denote with
$$
\alpha^j:=\gamma^0\gamma^j,\qquad j=1,2,3;
$$
notice that these matrices satisfy now the anticommutation relation
$$
\{\alpha^j,\alpha^k\}=2\delta^{jk},\qquad \forall\: j,k=1,2,3.
$$
We also write
$$
\mathcal{D}_{\mathbb{S}^2}=\alpha^2\left(-i\partial_\theta-\frac{i\cot(\theta)}2\right)-\alpha^3\frac{i}{\sin\theta}\partial_\phi .
$$
 Putting things together, we have finally reached the following representation for the Dirac equation on a spherically symmetric manifold $(M,g)$ with a metric of the form \eqref{sphermet}
 \begin{equation}\label{dir2}
 i\partial_t\psi=(\mathcal{D}+\gamma^0m)\psi,\qquad \mathcal{D}=-i\alpha^1\left(\partial_r+\frac{\varphi'(r)}{\varphi(r)}\right)+\frac1{\varphi(r)}{\mathcal{D}_{\mathbb{S}^2}}.
 \end{equation}

\subsection{Diagonalization}\label{Diag}

We want to diagonalize this operator and put it in a more convenient form. For this, we use the existence in the physics literature, see \cite{abri}, of a diagonilazation of a "cousin" to $\mathcal D_{\mathbb{S}^2}$ that is, the Dirac operator on the sphere $\mathbb S^2$ : 
$$
i\hat \nabla = -i\sigma_1 \Big( \partial_\theta + \frac{\cot \theta}{2} \Big) - i \frac{\sigma_2}{\sin \theta} \partial_\phi .
$$
Writing $H_\varphi = m\beta - i\alpha^1 \Big( \partial_r + \frac{\varphi'}{\varphi} \Big) + \frac1{\varphi} \mathcal D_{\mathbb{S}^2}$, we get that $H_\varphi$ can be written in blocks as
$$
\begin{pmatrix} m & A \\A & -m \end{pmatrix}
$$
with 
$$
A = -i \sigma_1 \Big( \partial_r + \frac{\varphi'}{\varphi} \Big) + \frac1{\varphi} \Big( -i \sigma_2  \Big( \partial_\theta + \frac{\cot \theta }{2} \Big) -\frac{\sigma_3}{\sin \theta} \partial_\phi \Big) .
$$
The following permutation
\begin{eqnarray*}
\sigma_2 & \leftarrow & \sigma_1 \\
\sigma_3 & \leftarrow & \sigma_2 \\
\sigma_1 & \leftarrow & \sigma_3
\end{eqnarray*}
is equivalent in the gamma matrices framework to the permutation 
\begin{eqnarray*}
\alpha_2 & \leftarrow & \alpha_1 \\
\alpha_3 & \leftarrow & \alpha_2 \\
\alpha_1 & \leftarrow & \alpha_3 \\
\beta & \leftarrow & \beta ,
\end{eqnarray*}
which in turns corresponds to an orthogonal change of basis in $\C^4$. In other words, up to a rotation in $\C^4$, $\psi$ satisfies $i\partial_t \psi  = H_\varphi \psi$ with 
$$
H_\varphi =\begin{pmatrix} m & -i \sigma_3 \Big( \partial_r + \frac{\varphi'}{\varphi} \Big) + \frac1{\varphi}(-i\hat \nabla)\\
-i \sigma_3 \Big( \partial_r + \frac{\varphi'}{\varphi} \Big) + \frac1{\varphi}(-i\hat \nabla)& -m \end{pmatrix}.
$$

The operator $-i\hat \nabla$ diagonalises into (we refer to Section 2.3 in \cite{abri})
$$
- i\hat \nabla \Gamma_{j,m}^\pm = \pm \lambda_j \Gamma_{j,m}^\pm
$$
where $j \in \frac12 + \N$ and $m_j \in \frac12 + \Z$ with the constraint $-j \leq m_j \leq j$ and finally $\lambda_j = \frac12 + j$. 
Note that the parametrization in $j,m_j$ corresponds to the parametrization of the diagonalization of $\mathcal D_{\R^3}$ one can find in Thaller's book, as $j$ corresponds to the primary quantum number of total angular momentum and $m_j$ to the secondary one. 
One can choose such $\Gamma_{j,m_j}^\pm$ such that
$$
\Gamma_{j,m_j}^\pm = \pm i \sigma_3 \Gamma_{j,m_j}^\mp \Leftrightarrow -i\sigma_3 \Gamma_{j,m_j}^\pm = \pm \Gamma_{j,m_j}^\mp.
$$
And of course, they are chosen such that they form an orthogonal basis of $L^2$, that is
$$
\an{\Gamma_{j_1,m_{j_1}}^{\varepsilon_1}, \Gamma_{j_2,m_{j_2}}^{\varepsilon_2}} = \delta^{\varepsilon_1,\varepsilon_2} \delta_{j_1,j_2} \delta_{m_{j_1},m_{j_2}}.
$$
Set $\tilde H_\varphi = -i\sigma_3 \Big(\partial_r + \frac{\varphi'}{\varphi} \Big) + \frac1{\varphi} (-i\hat \nabla)$.  In other words, we have 
$$
H_\varphi = \begin{pmatrix} m & \tilde H_\varphi \\ \tilde H_\varphi & -m
            \end{pmatrix} .
$$
Let $E_{j,m_j}^\pm = \frac1{\sqrt 2} \Big( \Gamma_{j,m_j}^+ \pm \Gamma_{j,m_j}^-\Big)$. Now, we see how $\tilde H_\varphi$ acts on $ H^1(\varphi(r)^2dr) \otimes E_{j,m_j}^\pm $: we have for $f \in H^1(\varphi(r)^2dr) $ 
$$
\tilde H_\varphi fE_{j,m_j}^\pm = \Big( \mp \Big( \partial_r + \frac{\varphi'}{\varphi} \Big)f + \frac{\lambda_j}{\varphi} f \Big) E_{j,m_j}^\mp.
$$
The $E_{j,m_j}^\pm$ form an orthogonal basis of $L^2(\mathbb{S}^2, \C^2)$. We introduce
$$
F_{j,m_j}^- = \begin{pmatrix} E_{j,m_j}^- \\ 0 \end{pmatrix} , F_{j,m_j}^+ = \begin{pmatrix} 0 \\ E_{j,m_j}^+  \end{pmatrix} , G_{j,m_j}^+  = \begin{pmatrix} E_{j,m_j}^+ \\ 0 \end{pmatrix}, G_{j,m_j}^- = \begin{pmatrix} 0 \\ -E_{j,m_j}^-  \end{pmatrix}  .
$$
These form an orthogonal basis of $L^2(\mathbb S^2, \C^4)$. We see now how $H_\varphi$ acts on $H^1(\varphi(r)^2dr) \otimes F_{j,m_j}^\pm$. Let $f \in H^1(\varphi(r)^2dr) $, we have :
$$
H_\varphi f F_{j,m_j}^- = \begin{pmatrix} m fE_{j,m_j}^- \\\Big( \Big( \partial_r + \frac{\varphi'}{\varphi} \Big)f + \frac{\lambda_j}{\varphi} f \Big) E_{j,m_j}^+ \end{pmatrix} = mfF_{j,m_j}^- +\Big( \Big( \partial_r + \frac{\varphi'}{\varphi} \Big) f+ \frac{\lambda_j}{\varphi} f \Big) F_{j,m_j}^+.
$$
For the same reasons
$$
H_\varphi f F_{j,m_j}^+ = \Big(- \Big( \partial_r + \frac{\varphi'}{\varphi} \Big)f + \frac{\lambda_j}{\varphi}f\Big) F_{j,m_j}^- - m fF_{j,m_j}^+.
$$
Therefore, on the subspace $H^1(\varphi(r)^2dr) \otimes Vect ( F_{j,m}^-, F_{j,m}^+)$, $H_\varphi$ acts like 
\begin{equation}\label{raddir1}
h_{j,m,\lambda_j} = \begin{pmatrix} m & -\Big(\partial_t + \frac{\varphi'}{\varphi} \Big) + \frac{\lambda_j}{\varphi} \\ \Big(\partial_t + \frac{\varphi'}{\varphi} \Big) + \frac{\lambda_j}{\varphi} &  -m \end{pmatrix}.
\end{equation}
We call $\tilde{\mathcal H}_{j,m_j,\lambda_j}$ the subspace of $L^2(\mathbb{S}^2, \C^4)$ generated by $F_{j,m_j}^-,F_{j,m_j}^+$.

We see now how $H_\varphi$ acts on $H^1(\varphi(r)^2dr) \otimes G_{j,m_j}^\pm$. Let $f \in H^1(\varphi(r)^2dr) $, we have :
$$
H_\varphi f G_{j,m_j}^+ = \begin{pmatrix} m fE_{j,m_j}^+ \\\Big(- \Big( \partial_r + \frac{\varphi'}{\varphi} \Big)f + \frac{\lambda_j}{\varphi} f \Big) E_{j,m_j}^- \end{pmatrix} = mfG_{j,m_j}^+ +\Big( \Big( \partial_r + \frac{\varphi'}{\varphi} \Big) f- \frac{\lambda_j}{\varphi} f \Big) G_{j,m_j}^-.
$$
For the same reasons
$$
H_\varphi f G_{j,m_j}^- = \Big(- \Big( \partial_r + \frac{\varphi'}{\varphi} \Big)f - \frac{\lambda_j}{\varphi}f\Big) G_{j,m_j}^+ - m fG_{j,m_j}^-.
$$
Therefore, on the subspace $H^1(\varphi(r)^2dr) \otimes Vect ( G_{j,m_j}^+, G_{j,m_j}^-)$, $H_\varphi$ acts like 
\begin{equation}\label{raddir2}
h_{j,m_j,-\lambda_j} = \begin{pmatrix} m & -\Big(\partial_t + \frac{\varphi'}{\varphi} \Big) - \frac{\lambda_j}{\varphi} \\ \Big(\partial_t + \frac{\varphi'}{\varphi} \Big) -\frac{\lambda_j}{\varphi} &  -m \end{pmatrix}.
\end{equation}
We call $\tilde{\mathcal H}_{j,m_j,-\lambda_j}$ the subspace of $L^2(\mathbb{S}^2, \C^4)$ generated by $G_{j,m_j}^+,G_{j,m_j}^-$.

We now refer to \cite{abri} eq (62) p12 to get that up to a local rotation 
$$
R_1 = e^{i\sigma_2 \frac{\theta}{2}} e^{i\sigma_3 \frac{\phi}{2}},
$$
the maps $\Gamma_{j,m_j}^\pm$ belong to a combination of spherical harmonics of degree $j-\frac12, j+\frac12$. More precisely, we have
$$
R_1^* \Gamma_{j,m_j}^{\pm} = \frac1{\sqrt 2} \begin{pmatrix} \sqrt{\frac{j+m_j}{2j}}Y_{j^-,m_j^-} \pm \sqrt{\frac{j-m_j+1}{2j+2}} Y_{j^+,m_j ^-}\\ \sqrt{\frac{j-m_j}{2j}}Y_{j^-,m_j^+}\mp \sqrt{\frac{j+m_j+1}{2j+1}} Y_{j^+,m_j^+}\end{pmatrix}
$$
where $Y_{j,m_j}$ are the standard spherical harmonics, $j^\pm$ and $m_j^\pm$ stands respectively for $j\pm \frac12$ and $m_j\pm \frac12$. In terms of $E$, that means
$$
R_1^*E_{j,m_j}^+ = \begin{pmatrix} \sqrt{\frac{j+m_j}{2j}} Y_{j-,m_j^-} \\ \sqrt{\frac{j-m_j}{2j}}Y_{j^-,m_j^+} \end{pmatrix} \textrm{ and } R_1^* E_{j,m_j}^- = \begin{pmatrix} \sqrt{\frac{j-m_j + 1}{2j+2}}Y_{j^+,m_j^-} \\ -\sqrt{\frac{j+m_j+1}{2j+2}}Y_{j^+,m_j^+} \end{pmatrix}.
$$
Hence, we get that 
$$
\begin{pmatrix} Y_{j^+,m_j ^-}\\ 0 \end{pmatrix} \textrm{ and } \begin{pmatrix} 0\\ Y_{j^+,m_j^+}\end{pmatrix}
$$
are linear combinations of $R_1^* E_{j,m_j}^-$ and $R_1^* E_{j+1,m_j}^+$. Writing $\mathcal H_{j,m_j, k_j} = (R_1^* \oplus R_1^*) \tilde{\mathcal H}_{j,m_j,k_j}$ and keeping in mind the notations of the introduction \eqref{Hj}, \eqref{Pn}, we get as desired
$$
\mathcal S_n \subseteq \mathcal P_n.
$$
We note also that $\mathcal H_{j,m_j,k_j}$ corresponds de facto to the partial wave subspaces in Thaller's book. Note that $k_j$ corresponds to either plus or minus $\lambda_j = j + \frac12$. 

We write 
$$
\tilde \Pi_{j,m_j,k_j} : L^2(M, \C^4) \rightarrow L^2(\varphi^2(r)dr, \C^2)
$$
with value
$$
\tilde \Pi_{j,m_j,k_j} u = \left\lbrace{\begin{array}{cc}
                                  \begin{pmatrix} \an{F_{j,m_j,k_j}^-, u}_{L^2(M,\C^4)} \\ \an{F_{j,m_j,k_j}^+,u}_{L^2(M,\C^4)}\end{pmatrix} & \textrm{ if } k_j>0 \\
                                  \begin{pmatrix} \an{G_{j,m_j,k_j}^+, u}_{L^2(M,\C^4)} \\ \an{G_{j,m_j,k_j}^-,u}_{L^2(M,\C^4)}\end{pmatrix} & \textrm{ if } k_j<0   
                                 \end{array}} \right.
$$
and $\tilde \Pi_{j,m_j,k_j}^*$ its adjoint. Note that its image is $\tilde{\mathcal H}_{j,m_j,k_j}$. We write $\Pi_{j,m_j,k_j} = R_1 \oplus R_1 \tilde \Pi_{j,m_j,k_j}$. Its adjoint $R_1^* \oplus R_1^* \tilde \Pi_{j,m_j,k_j}^*$ has image $\mathcal H_{j,m_j,k_j}$.

Let $\psi$ be a solution to
$$
i\partial_t \psi = H_\varphi \psi = \sum_{j,m_j,k_j} \tilde \Pi_{j,m_j,k_j}^*h_{j,m_j,k_j} \tilde \Pi_{j,m_j,k_j} \psi .
$$
We introduce $u = (R_1^*\oplus R_1^*) \psi$, we have that $u$ solves 
$$
i\partial_t u = \sum_{j,m_j,k_j}  \Pi_{j,m_j,k_j}^*h_{j,m_j,k_j}  \Pi_{j,m_j,k_j} \psi .
$$

We prove Strichartz estimates and indeed work for $u$ as it is equivalent (even in the nonlinear model) to working on $\psi$, given the form of $R_1$.
 
\begin{remark} The fact that we have to apply this rotation $R_1$ is due to the fact that we have chosen to work with a different represention from the cartesian coordinates one can find in Thaller's book. The solution $u$ is actually the spinor we would work on if we had chosen the cartesian representation instead of the spherical one from the beginning. We chose the spherical one to avoid very heavy computations.\end{remark}

 \subsection{The Dirac equation on weighted spinors}
 The idea now is to reduce the study of the Dirac equation on a partial wave subspace to the one of a (radial) Dirac equation on $\mathbb{R}^3$ with a potential. 

We introduce the multiplication by $\sigma$ with
\begin{equation}\label{weightspin}
\sigma(r):=\frac{r}{\varphi(r)}.
\end{equation}
Straightforward calculations show that
$$
\sigma'(r)=\frac{\varphi(r)-r\varphi'(r)}{\varphi(r)^2}
$$
so that
\begin{equation}\label{sigma'sigma}
\frac{\sigma'}\sigma=\frac{1}r-\frac{\varphi'(r)}{\varphi(r)}.
\end{equation}
Therefore, the operator $\mathcal{D}$ in \eqref{dir2} acts on the weighted spinor (the angular part is left unchanged) as
\begin{equation}\label{newdir}
\sigma^{-1}\mathcal{D}\sigma =-i\alpha^1\left(\partial_r+\frac{1}{r}\right)+\frac1{\varphi(r)}{\mathcal{D}_{\mathbb{S}^2}}.
\end{equation}
The operators \eqref{raddir1}, \eqref{raddir2} are accordingly modified into the operator
\begin{equation}\label{raddirsigma}
h_{m_j,k_j}^\sigma=\left(\begin{array}{cc}m & -\frac{d}{dr}-\frac{1}r+\frac{k_j}{\varphi(r)} \\\frac{d}{dr}+\frac{1}{r}+\frac{k_j}{\varphi(r)} & -m\end{array}\right).
\end{equation}
In other words, the multiplication by $\sigma$ given by \eqref{weightspin} has turned the Dirac equation on $L^2(\varphi^2(r)dr) \otimes \mathcal H_{j,m_j,k_j}$ into the system on $L^2(r^2dr)^2$ : 
\begin{equation}\label{systg}
\begin{cases}
i\partial_t g_{k_j}^++mg_{k_j}^++ \left(-\frac{d}{dr}-\frac{1}r+\frac{k_j}{\varphi(r)}\right)g_{k_j}^-=0\\
i\partial_t g_{k_j}^--mg_{k_j}^-+ \left(\frac{d}{dr}+\frac{1}r+\frac{k_j}{\varphi(r)}\right)g_{k_j}^+=0.
\end{cases}
\end{equation}

Notice now that system above can be seen as the restriction of the Dirac equation on $\mathbb{R}^{1+3}$ to the $k_j$-th partial wave subspace perturbed with a (radial) potential (compare with (4.104) pag.125 in \cite{thaller} or \cite{lanlif} pag 108). This allows to rely on the well developed theory for potential perturbations of dispersive flows on $\mathbb{R}^n$ to obtain, quite straightforwardly, local in time Strichartz estimates. By summing and subtracting the angular Dirac operator ${\mathcal{D}_{\mathbb{S}^2}}$ with the weight $r^{-1}$, which is the one corresponding to the flat case, we can indeed rewrite \eqref{newdir} as
\begin{equation}\label{moddirac}
\sigma^{-1}\mathcal{D}\sigma =\mathcal{D}_{\mathbb{R}^3}+\left(\frac1{\varphi(r)}-\frac1{r}\right){\mathcal{D}_{\mathbb{S}^2}}.
\end{equation}
This suggests that we are dealing with a standard Dirac equation perturbed, on each partial wave subspace, with a radial  potential of the form $V(r)=k_j\left(\frac1{\varphi(r)}-\frac1{r}\right)\begin{pmatrix}
0 & 1 \\ 1 &0 
\end{pmatrix} $.

\begin{remark}
We should recall at this point that the action of the Dirac operator in the flat case perturbed by potentials of the form $V(x)=V(|x|)1_4+i\beta\alpha\cdot e_1 V_2(|x|)$, where $V_1$ and $V_2$ are two scalar functions, leaves invariant the partial wave subspaces defined above, and such action can be represented by the $2\times2$ matrix 
\begin{equation}\label{raddirV}
h^{pot}_{m_j,k_j}=\left(\begin{array}{cc}m+V_1(r) & -\frac{d}{dr}-\frac1r+\frac{k_j}{\varphi(r)}+V_2(r) \\\frac{d}{dr}+\frac1r+\frac{k_j}{\varphi(r)}+V_2(r) & -m+V_1(r)\end{array}\right).
\end{equation}
This is formula (4.129) in \cite{thaller}; we should stress that the additional term $\frac1r$ that we have above, and that is missing in \cite{thaller}, is due to the fact that here we have introduced a different weighted spinor, in order to deal with the metrics-type perturbative term. Therefore, if we take $V_1=0$ and $V_2=k_j\left(\frac1{\varphi(r)}-\frac1{r}\right)$ we have a structure as above.
\end{remark}

\begin{remark}
Relation \eqref{moddirac}, has a direct implication on the self-adjointness of the operator $\mathcal{D}$. Indeed, if we restrict it to any partial wave subspace, it defines a selfadjoint operator on $D(h_{m_j,k_j}^\sigma)=H^1(\mathbb{R},\mathbb{C}^4)$ due to Kato-Rellich Theorem as in fact the perturbative term $V(r)=k_j\left(\frac1{\varphi(r)}-\frac1{r}\right)$ is bounded in our assumptions {\bf (A1)}. Therefore $\sigma^{-1}\mathcal D \sigma$ is a self-adjoint operator in $\R^3$ with domain $H^1(\R^3)$. Given that for any test functions $u,v$,
$$
\an{\sigma^{-1}\mathcal D \sigma u, v}_{\R^3} = \an{\sigma \mathcal D \sigma u, v}_M
$$
we get that $\mathcal D$ is a self-adjoint operator in $M$ with domain $\sigma^{-1} H^1(\R^3) = H^1(M)$ as explained in the proof of Therorem \ref{teo1}. 
\end{remark}

\begin{remark}
Note that it would be possible to define a slightly more general form of weighted spinors \eqref{weightspin} by setting, for any $\kappa\in\N$,
\begin{equation}\label{weightspin2}
\sigma(r):=\frac{r^{\kappa+1}}{\varphi(r)}.
\end{equation}
This more general choice will not produce any significant advantage for the purpose of the present paper, and therefore we retrieve the specialized form with $\kappa=0$ of \eqref{weightspin}. Anyway, we mention the fact that it might be useful in view of proving global Strichartz estimates: indeed, by squaring the resulting system (that is, the analogue of \eqref{systg} for the new weighted spinors), one would obtain a system of decoupled Klein-Gordon equations on a higher space dimension $\mathbb{R}^N$ with $N>3$ perturbed with a radial potential that, in general, will have an inverse-square decay. This kind of dynamics have been widely investigated in literature (see e.g. \cite{burq}) and dispersive estimates are well known for them; there is thus the chance to adapt these results to the present setting, and this will be the object of future investigations. We mention that a similar point of view has been developed in \cite{danzha} in the different contest of the study of equivariant wave maps.
\end{remark}

 \section{Radial Strichartz estimates: proof of Theorem \ref{teo1}}

 This section is devoted to the proof of the weighted Strichartz estimates stated in Theorem \eqref{teo1}. The main idea will be to use the decomposition discussed in the previous section to reduce the problem to a Dirac equation on $\R^3$ perturbed with a potential, and then rely on Proposition \ref{boundpot}.

\begin{lemma}\label{lemma1} Let $V = \Big( \frac1{\varphi} - \frac1{r}) {\mathcal{D}_{\mathbb{S}^2}}$. Let $V_n$ be the restriction of $V$ to $\mathcal P'_n : =L^2(r^2dr) \otimes \mathcal P_n$. We have that $V_n$ is an endomorphism of $\mathcal P'_n$, it is continuous from $H^{s}(\R^3)$ to itself for any $s\in [0,1]$ and
$$
\|V_n\|_{H^{s} \rightarrow H^s} \lesssim \an n .
$$
\end{lemma}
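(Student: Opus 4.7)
The plan is to establish the $L^2$ and $H^1$ continuity separately and then conclude by complex interpolation on $H^s$, $s\in[0,1]$. The starting observation is that $V = W(r)\mathcal D_{\mathbb{S}^2}$ with $W(r) := \frac{1}{\varphi(r)} - \frac{1}{r}$, so $V$ factors into a radial multiplication and a purely angular operator. The endomorphism property is then immediate from the diagonalisation of Section \ref{Diag}, which shows that $\mathcal D_{\mathbb{S}^2}$ preserves each partial wave subspace $\mathcal H_{j,m_j,k_j}$ (and hence $\mathcal P_n$). From the same diagonalisation, $\mathcal D_{\mathbb{S}^2}$ acts on $\mathcal H_{j,m_j,k_j}$ as an off-diagonal $2\times 2$ block with entries $\pm\lambda_j=\pm(j+\tfrac12)$, so its operator norm on $\mathcal P_n = \mathcal H_{n-1/2}\oplus \mathcal H_{n+1/2}$ is bounded by $n+1 \lesssim \langle n\rangle$.

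Next I would check that both $W$ and $W'$ are bounded on $[0,+\infty)$: the Taylor expansion of $\varphi$ at zero (all even derivatives vanish and $\varphi'(0)=1$, hence $\varphi(r) = r + O(r^3)$) ensures the $1/r$ and $1/\varphi$ singularities cancel to leading order, while Assumption (A1) at infinity ($|\varphi'/\varphi| \leq C$ together with $\inf_{r\geq 1}\varphi >0$) controls the tail. The $L^2$ estimate is then immediate:
\[
\|V_n u\|_{L^2}^2 \leq \|W\|_\infty^2 \int_0^{+\infty} \|\mathcal D_{\mathbb{S}^2} u(r,\cdot)\|_{L^2(\mathbb{S}^2)}^2\, r^2\, dr \lesssim \langle n\rangle^2 \|u\|_{L^2}^2.
\]

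For the $H^1$ bound I would use the polar decomposition $\|u\|_{H^1}^2 \sim \|u\|_{L^2}^2 + \|\partial_r u\|_{L^2}^2 + \|\tfrac{1}{r}\nabla_{\mathbb{S}^2} u\|_{L^2}^2$ and control each contribution to $\|V_n u\|_{H^1}^2$ separately. The $L^2$ piece is handled by the previous step. The radial piece is straightforward from the product rule $\partial_r(V_n u) = W'\mathcal D_{\mathbb{S}^2} u + W\mathcal D_{\mathbb{S}^2}\partial_r u$: since $W, W'$ are bounded and $\mathcal D_{\mathbb{S}^2}$ has norm $\lesssim \langle n\rangle$ on $\mathcal P_n$, this contribution is $\lesssim \langle n\rangle^2 \|u\|_{H^1}^2$.

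The main obstacle is the angular piece. A naive estimate, using both that $V_n u\in \mathcal P_n$ angularly (so $\|\tfrac{1}{r}\nabla_{\mathbb{S}^2}(V_n u)\|_{L^2}^2 \lesssim \langle n\rangle^2 \|V_n u/r\|_{L^2}^2$) and the $\mathcal D_{\mathbb{S}^2}$ norm bound, loses an extra factor of $\langle n\rangle^2$:
\[
\|\tfrac{1}{r}\nabla_{\mathbb{S}^2}(V_n u)\|_{L^2}^2 \lesssim \langle n\rangle^4 \|W\|_\infty^2\|u/r\|_{L^2}^2.
\]
To close the estimate I would invoke a refined Hardy-type inequality on $\mathcal P_n$: since the angular content of $u$ lives in spherical harmonics of degree in $\{n-1,n,n+1\}$, the lower bound $\|\tfrac{1}{r}\nabla_{\mathbb{S}^2} u\|_{L^2}^2 \gtrsim n^2 \|u/r\|_{L^2}^2$ holds for $n\geq 2$ while the standard Hardy inequality on $\mathbb R^3$ covers $n\in\{0,1\}$; either way $\|u/r\|_{L^2}^2 \lesssim \langle n\rangle^{-2} \|u\|_{H^1}^2$, which precisely cancels the spurious factor and produces $\|V_n u\|_{H^1}\lesssim \langle n\rangle \|u\|_{H^1}$. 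Complex interpolation between the two endpoints then gives the desired bound on $H^s$ for all $s\in [0,1]$.
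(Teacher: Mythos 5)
Your proposal is correct and reaches the stated bound, but it takes a slightly different route from the paper on the $H^1$ step. Both arguments decompose $V_n$ along the partial wave subspaces, both establish $W = \frac1\varphi - \frac1r \in W^{1,\infty}$ by the same Taylor-expansion-near-zero plus boundedness-at-infinity argument (so that part is essentially identical), and both interpolate between $L^2$ and $H^1$. The difference is in how the angular part of the $\dot H^1(\R^3)$ norm is controlled after applying $V_n$.

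The paper works block by block on each $L^2(r^2dr)\otimes\mathcal H_{j,m_j,k_j}$, represents $V_{j,m_j,k_j}$ as the off-diagonal matrix $k_j W(r)\begin{pmatrix}0&1\\1&0\end{pmatrix}$ acting on the two radial coordinates $(f^+,f^-)$, and simply asserts $\|V_{j,m_j,k_j}\|_{H^1\to H^1}\leq \|W\|_{W^{1,\infty}}|k_j|$. This is clean but slightly elides the fact that the full $\dot H^1(\R^3)$ norm of $f^+\Phi^++f^-\Phi^-$ contains the angular contributions $c_\pm\|f^\mp/r\|^2$ with $c_\pm=\|\nabla_\omega\Phi^\pm\|^2_{L^2(\mathbb{S}^2)}$, and the off-diagonal swap interchanges the coefficients $c_+$ and $c_-$; one needs the observation that $c_+$ and $c_-$ are comparable (their ratio is uniformly bounded, with the degenerate case $c_+=0$ for $j=\frac12$ handled by the radial Hardy inequality) for the claimed bound to hold. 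You make precisely this issue explicit, but at the coarser level of the full block $\mathcal P_n$: you first lose $\langle n\rangle^2$ from the angular derivative of $V_n u\in\mathcal P'_n$ and another $\langle n\rangle^2$ from $\|\mathcal D_{\mathbb{S}^2}\|_{\mathcal P_n\to\mathcal P_n}$, then recover a factor $\langle n\rangle^{-2}$ via a degree-dependent Hardy inequality $\|u/r\|_{L^2}^2\lesssim\langle n\rangle^{-2}\|u\|_{H^1}^2$ on $\mathcal P'_n$ (standard 3D Hardy covering $n\in\{0,1\}$). This gives the same net bound $\|V_n\|_{H^1\to H^1}\lesssim\langle n\rangle\|W\|_{W^{1,\infty}}$. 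Your version is somewhat longer but makes the angular-derivative bookkeeping completely explicit, whereas the paper's is shorter but leaves the $H^1$ boundedness of the $2\times2$ swap (and the implicit use of a Hardy inequality on the radial components) as an unstated step. Either way, the radial-derivative piece via the Leibniz rule and the $W^{1,\infty}$ bound on $W$, as well as the final interpolation, match the paper.
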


\begin{remark}
All the norms appearing both in the statement above and in the next proof, when not differently specified, will be taken on $\R^3$.\end{remark}

\begin{proof}  We have that 
$$
V_n = \Big( \bigoplus_{j=n-1/2, m_j, k_j} V_{j,m_j,k_j} \Big) \oplus \Big( \bigoplus_{j=n+1/2, m_j, k_j} V_{j,m_j,k_j} \Big)
$$
where $V_{j,m_j,k_j}$ is the restriction to  $L^2(r^2dr) \otimes \mathcal H_{j,k_j,m_j}$ of $V_n$, that is the restriction to $L^2(r^2dr) \otimes \mathcal H_{j,k_j,m_j}$ of $V$.

The operator $V_{j,m_j,k_j}$ is represented by the matrix
$$
V_{j,m_j,k_j}= \Big( \frac1{\varphi(r)} - \frac1{r} \Big) k_j \begin{pmatrix} 0 & 1 \\1 & 0 \end{pmatrix},
$$
therefore
$$
\|V_{j,m_j,k_j}\|_{L^2 \rightarrow L^2} \leq \big\| \frac1{\varphi(r)} - \frac1{r} \big\|_{L^\infty} |k_j| 
$$
and
$$
\|V_{j,m_j,k_j}\|_{H^1 \rightarrow H^1} \leq \big\| \frac1{\varphi(r)} - \frac1{r} \big\|_{W^{1,\infty}} |k_j| .
$$
We have that 
$$
\frac1{\varphi(r)} - \frac1{r} = \frac{r-\varphi(r)}{r\varphi(r)}.
$$
Since as $r$ goes to $0$,
$$
\varphi(r) =  r+ o(r^2)
$$
we get that there exists $\eta> 0$ such that for all $r< \eta$
$$
|\varphi(r) - r| \leq r^2 \textrm{ and } \varphi(r) \geq \frac{r}{2}
$$
hence
$$
\Big| \frac1{\varphi(r)} - \frac1{r} \Big| \leq 2.
$$
For $r\geq \eta$, we use that $\varphi$ is bounded by below for $r\geq 1$  and that $\varphi$ is smooth and positive on $(0,\infty)$ to get
$$
\sup_{r\geq \eta }\Big| \frac1{\varphi(r)} - \frac1{r} \Big| <\infty.
$$
Hence 
$$
\big\| \frac1{\varphi(r)} - \frac1{r} \big\|_{L^\infty} < \infty.
$$
What is more 
$$
\partial_r \Big(\frac1{\varphi(r)} - \frac1{r}\Big)  = \frac1{r^2} - \frac{\varphi'(r)}{\varphi^2(r)}  = \frac{\varphi^2(r) - r^2\varphi'(r)}{r^2\varphi^2(r)}.
$$
Since when $r$ goes to $0$, $\varphi(r)^2 - r^2\varphi'(r) = O(r^4)$ and since $\varphi^2(r) = r^2 + o(r^3)$ we get that there exists $C,\eta$ such that for all $r<\eta$,
$$
| \varphi^2(r) - r^2\varphi'(r) | \leq C r^4 \textrm{ and } r^2 \varphi^2(r) \geq \frac{r^4}{2}
$$
from which we get
$$
\Big|\partial_r \Big(\frac1{\varphi(r)} - \frac1{r}\Big)\Big| \leq 2C.
$$
We use that $\frac1{\varphi}$ and $\varphi'/\varphi$ are bounded on $[\eta,\infty[$ to get
$$
\sup_{r\geq \eta}\Big|\partial_r \Big(\frac1{\varphi(r)} - \frac1{r}\Big)\Big| < \infty.
$$
Therefore,
$$
\big\| \frac1{\varphi(r)} - \frac1{r} \big\|_{W^{1,\infty}} < \infty.
$$
We get that there exists $C_\varphi$ such that
\begin{eqnarray*}
\|V_{j,m_j,k_j}\|_{L^2\rightarrow L^2} &\leq & C_\varphi |k_j| \\
\|V_{j,m_j,k_j}\|_{H^1\rightarrow H^1} &\leq & C_\varphi |k_j| 
\end{eqnarray*}
and by interpolation for all $s \in [0,1]$,
$$
\|V_{j,m_j,k_j}\|_{H^s\rightarrow H^s} \leq C_\varphi |k_j|
$$
and since $|k_j| = j+\frac12 = n$ or $n+1$ we get by summing up
$$
\|V_n\|_{H^s \rightarrow H^s} \leq C_\varphi \an n.
$$
\end{proof}

\begin{lemma}\label{lemma2} Let $s\in [0,1]$ and $\sigma$ be given by \eqref{weightspin}. We have that the multiplication by $\sigma $ is continuous from $H^s(\R^3)$ to $H^s(M)$ and that the multiplication by $\sigma^{-1}$ is continuous from $H^s(M)$ to $H^s(\R^3)$.
\end{lemma}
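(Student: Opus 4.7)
The plan is to handle the endpoint cases $s=0$ and $s=1$ separately, and to recover $s\in(0,1)$ by complex interpolation. The case $s=0$ is essentially a triviality: the weight $\sigma(r)=r/\varphi(r)$ is engineered exactly so that $\sigma^2\varphi^2=r^2$, whence
$$
\|\sigma f\|_{L^2(M)}^2 = \int |f|^2\sigma^2\varphi^2 dr d\omega = \int |f|^2 r^2 dr d\omega = \|f\|_{L^2(\R^3)}^2,
$$
so that $\sigma$ and $\sigma^{-1}$ act as isometries between $L^2(\R^3)$ and $L^2(M)$.

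For $s=1$, I would lean on the intertwining identity established in Section~\ref{radsec}, namely
$$
\sigma^{-1}H\sigma = H_{\R^3} + V,\qquad V = \left(\frac1{\varphi(r)}-\frac1{r}\right)\mathcal D_{\mathbb{S}^2},
$$
with $H=\mathcal D+\gamma^0 m$ on $M$ and $H_{\R^3}=\mathcal D_{\R^3}+\gamma^0 m$ on Euclidean space. The Lichnerowicz-type identity $H^2=-\Delta_h+\tfrac14\mathcal R_h+m^2$, combined with the hypothesis that $\mathcal R_h$ is bounded, yields the equivalence $\|g\|_{H^1(M)}^2\simeq\|g\|_{L^2(M)}^2+\|Hg\|_{L^2(M)}^2$, with the analogue $\|f\|_{H^1(\R^3)}^2\simeq\|f\|_{L^2(\R^3)}^2+\|H_{\R^3}f\|_{L^2(\R^3)}^2$ in the flat case. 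Combined with the $L^2$-isometry of the first step, the continuity of $\sigma:H^1(\R^3)\to H^1(M)$ thus reduces to establishing
$$
\|(H_{\R^3}+V)f\|_{L^2(\R^3)}\lesssim \|f\|_{H^1(\R^3)},
$$
and since $H_{\R^3}$ is harmless, the whole matter comes down to the potential estimate $\|Vf\|_{L^2(\R^3)}\lesssim \|f\|_{H^1(\R^3)}$.

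This last bound is the real obstacle. Writing $V=\frac{r-\varphi(r)}{r\varphi(r)}\mathcal D_{\mathbb{S}^2}$, I would invoke the argument already used in the proof of Lemma~\ref{lemma1} to see that the scalar factor $\varphi^{-1}-r^{-1}$ is a bounded function on $(0,\infty)$, thanks to the expansion $\varphi(r)=r+O(r^3)$ near the origin and the lower bound $\inf_{r\geq 1}\varphi(r)>0$ at infinity; the remaining angular factor $\mathcal D_{\mathbb{S}^2}$ is a first-order operator dominated, fiberwise in $r$, by the angular part $\int|\nabla_\omega f|^2 dr d\omega$ of the $\dot H^1(\R^3)$ norm in spherical coordinates. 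The reverse mapping $\sigma^{-1}:H^s(M)\to H^s(\R^3)$ is treated by the symmetric conjugation $\sigma H_{\R^3}\sigma^{-1}=H-\sigma V\sigma^{-1}$, where the commutator of $V$ with $\sigma^{\pm 1}$ causes no trouble since $V$ involves only angular derivatives while $\sigma$ is purely radial. Finally, the intermediate range $s\in(0,1)$ is obtained by interpolating between the $L^2$-isometry and the $H^1$-continuity just established.
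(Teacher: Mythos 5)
Your $s=0$ step is correct and identical to the paper's: the weight $\sigma$ is, by construction, an $L^2$-isometry between $\R^3$ and $M$.

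For $s=1$ you take a genuinely different route, and it contains a gap. The paper argues directly and much more elementarily: it expands $\grad(\sigma u) = (\grad\sigma)u + \sigma\grad u$ by the product rule, uses the $L^2$-isometry to identify $\|\sigma\grad u\|_{L^2(M)}$ with $\|u\|_{\dot H^1(\R^3)}$, and reduces the first term to the single fact that $\frac{\sigma'}{\sigma} = \frac1r - \frac{\varphi'}{\varphi}$ belongs to $L^\infty$, which follows from the Taylor expansion of $\varphi$ at $0$ and the (A1) bound $|\varphi'/\varphi|\leq C$ away from the origin. Nothing more is needed.

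Your argument instead passes through the Lichnerowicz-type equivalence $\|g\|_{H^1}\simeq\|g\|_{L^2}+\|Hg\|_{L^2}$ and the conjugation identity $\sigma^{-1}H\sigma = H_{\R^3}+V$, and reduces to the claimed bound $\|Vf\|_{L^2(\R^3)}\lesssim\|f\|_{H^1(\R^3)}$. That bound does \emph{not} follow from Assumptions (A1). Writing things in spherical coordinates, the angular part of $\|f\|_{\dot H^1(\R^3)}^2$ is $\int r^{-2}\,|\mathcal D_{\mathbb{S}^2}f|^2\,r^2\,dr\,d\omega = \int |\mathcal D_{\mathbb{S}^2}f|^2\,dr\,d\omega$, whereas
$$
\|Vf\|_{L^2(\R^3)}^2 = \int \Big|\frac1{\varphi}-\frac1{r}\Big|^2\,|\mathcal D_{\mathbb{S}^2}f|^2\,r^2\,dr\,d\omega .
$$
Comparing the two requires the pointwise bound $\bigl(\frac1{\varphi}-\frac1{r}\bigr)^2 r^2 = \bigl(\frac{r}{\varphi}-1\bigr)^2 \lesssim 1$, i.e.\ $r/\varphi$ bounded, and this is \emph{not} part of (A1): for instance $\varphi$ may tend to a positive constant as $r\to\infty$, so that $r/\varphi\to\infty$. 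The $L^\infty$ bound on the scalar $\frac1\varphi-\frac1r$ that you correctly borrow from Lemma \ref{lemma1} is therefore insufficient; in fact Lemma \ref{lemma1} itself only yields $\|V_n\|_{H^s\to H^s}\lesssim\langle n\rangle$, a bound that grows with the angular frequency, which is precisely a reflection of the fact that $V$ is not uniformly $H^1\to L^2$ bounded on the full space. The same objection applies symmetrically to the step treating $\sigma^{-1}$ via $\sigma H_{\R^3}\sigma^{-1}=H-V$. The paper's product-rule argument never compares angular weights and so is free of this restriction; your route would only go through under the extra hypothesis $\sup r/\varphi<\infty$ (and its counterpart for the $\sigma^{-1}$ direction), which the lemma is not allowed to assume.
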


\begin{proof} For $s=0$, the multiplication by $\sigma$ is an isometry from $L^2(\R^3)$ to $L^2(M)$ which implies both continuities.

For $s=1$, we have for $u \in H^1(\R^3)$
$$
\|\sigma u\|_{\dot H^1(M)} = \|\grad (\sigma u) \|_{L^2(M)} \leq \|(\grad \sigma) u\|_{L^2(M)} + \|\sigma \grad u\|_{L^2(M)}.
$$
We have that
$$
\|\sigma \grad u\|_{L^2(M)} = \|\grad u\|_{L^2(\R^3)} = \|u\|_{\dot H^1(\R^3)}.
$$
What is more 
$$
\|(\grad \sigma) u\|_{L^2(M)} = \|\frac{\sigma'}{\sigma} u\|_{L^2(\R^3)}.
$$
We have 
$$
\frac{\sigma'}{\sigma } = \frac1{r} - \frac{\varphi'}{\varphi} = \frac{\varphi - r\varphi'}{r\varphi}.
$$
We use that as $r$ goes to $0$
$$
\frac{\varphi - r\varphi'}{r\varphi} = o(1)
$$
and that for any $\eta > 0$, for all $r\geq \eta$,
$$
\Big| \frac{\sigma'}{\sigma } \Big| \leq \frac1{\eta} + \|\varphi'/\varphi\|_{L^\infty([\eta,\infty[)}
$$
to get that
$$
\frac{\sigma'}{\sigma} \in L^\infty
$$
and obtain that the multiplication by $\sigma$ is continuous from $H^1(\R^3)$ to $H^1(M)$ and by interpolation from $H^s(\R^3)$ to $H^s(M)=$ for $s \in [0,1]$.

For $u \in H^1(M)$, we have
$$
\|\sigma^{-1}u\|_{\dot H^1(\R^3)} =  \|\grad ( \sigma^{-1} u)\|_{L^2(\R^3)} \leq \|\frac{\sigma'}{\sigma^2}u\|_{L^2(\R^3)} + \|\sigma^{-1}\grad u\|_{L^2(\R^3)}.
$$
We have 
$$
\|\sigma^{-1}\grad u\|_{L^2(\R^3)} = \|\grad u \|_{L^2(M)} = \|u\|_{\dot H^1(M)} \leq \|u\|_{H^1(M)}
$$
and
$$
\|\frac{\sigma'}{\sigma^2}u\|_{L^2(\R^3)} = \|\frac{\sigma'}{\sigma}u \|_{L^2(M)}
$$
and since $\frac{\sigma'}{\sigma}$ belongs to $L^\infty$, we get
$$
\|\sigma^{-1} u\|_{H^1(\R^3)} \leq C_\varphi \|u\|_{H^1(M)}
$$ 
hence the multiplication by $\sigma^{-1}$ is continuous from $H^1(M)$ to $H^1(\R^3)$ and from $L^2(M)$ to $L^2(\R^3)$ 
and by interpolation from $H^s(M)$ to $H^s(\R^3)$ for $s\in [0,1]$.
\end{proof}

\begin{lemma}\label{lemma3} Let $V = \Big( \frac1{\varphi} - \frac1{r}) {\mathcal{D}_{\mathbb{S}^2}}$. The, the flow $e^{it(\mathcal D_{\R^3} + m\beta + V)}$ is continuous from $H^s(\R^3)$ to $\mathcal C(\R,H^s(\R^3))$ for any $s\in [0,1]$.
\end{lemma}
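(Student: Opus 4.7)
The plan is to observe that the operator $\mathcal D_{\R^3}+m\beta+V$ is unitarily equivalent to the full Dirac Hamiltonian $H_M := \mathcal D + m\beta$ on $M$ via the weighted-spinor transformation of Section 3.3, and then transfer strong continuity properties from the latter to the former. All the pieces needed are already in place.

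Step 1 (Unitary equivalence). Multiplication by $\sigma^{-1}=\varphi/r$ defines a unitary isomorphism $T : L^2(M)\to L^2(\R^3)$, since
$$
\|T\psi\|_{L^2(\R^3)}^2 = \int\!\Big(\frac{\varphi}{r}\Big)^2 |\psi|^2\, r^2\, drd\omega = \int|\psi|^2 \varphi^2\, drd\omega = \|\psi\|_{L^2(M)}^2.
$$
The conjugation formula derived in Section 3.3 together with the fact that $\sigma$ commutes with $\beta$ (it is a scalar) yields $T\, H_M\, T^{-1}=\mathcal D_{\R^3}+m\beta+V$. By Lemma \ref{lemma2}, $T$ is also an isomorphism $H^s(M)\to H^s(\R^3)$ for $s\in[0,1]$, so in particular $T(H^1(M)) = H^1(\R^3)$. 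Since $H_M$ is self-adjoint on $L^2(M)$ with domain $H^1(M)$ (last remark of Section 3), the operator $\mathcal D_{\R^3}+m\beta+V$ is self-adjoint on $L^2(\R^3)$ with domain $H^1(\R^3)$, and
$$e^{it(\mathcal D_{\R^3}+m\beta+V)} = T\, e^{itH_M}\, T^{-1}.$$
By Lemma \ref{lemma2}, it therefore suffices to prove that $e^{itH_M}$ is strongly continuous on $H^s(M)$ for $s\in[0,1]$.

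Step 2 (Continuity on $H^s(M)$). The case $s=0$ is Stone's theorem. For $s=1$, I would use the squaring identity $H_M^2 = -\Delta_h + \tfrac14\mathcal R_h + m^2$ from \eqref{quaddir}. Since $\mathcal R_h$ is bounded by Assumption (A1),
$$
\|H_M u\|_{L^2(M)}^2 = \langle H_M^2 u, u\rangle = \|u\|_{\dot H^1(M)}^2 + \int\Big(\tfrac14\mathcal R_h + m^2\Big)|u|^2\, d\mathrm{vol}_h,
$$
which, combined with $|\int(\tfrac14\mathcal R_h+m^2)|u|^2\,d\mathrm{vol}_h|\lesssim \|u\|_{L^2(M)}^2$, gives the norm equivalence
$$\|u\|_{L^2(M)}^2 + \|H_M u\|_{L^2(M)}^2 \sim \|u\|_{H^1(M)}^2.$$
That is, the graph norm of $H_M$ is equivalent to the $H^1(M)$ norm. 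Since $H_M$ commutes with its flow, $e^{itH_M}$ preserves both $\|u\|_{L^2(M)}$ and $\|H_M u\|_{L^2(M)}$, hence the $H^1(M)$ norm up to constants. For strong continuity, given $u\in H^1(M)=D(H_M)$, both $t\mapsto e^{itH_M}u$ and $t\mapsto e^{itH_M}(H_M u) = H_M e^{itH_M}u$ are continuous in $L^2(M)$ (Stone's theorem, applied to $u$ and to $H_M u\in L^2$), so $t\mapsto e^{itH_M}u$ is continuous in the graph norm, i.e.\ in $H^1(M)$. Intermediate $s\in(0,1)$ are handled by a density argument using boundedness on $H^s$ (obtained by complex interpolation between $L^2(M)$ and $H^1(M)$) and strong continuity on the dense subspace $H^1(M)$. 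Conjugating back by $T$ via Lemma \ref{lemma2} completes the argument.

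The only non-formal point is the graph-norm/$H^1(M)$-norm equivalence in Step 2, which crucially uses the boundedness of the scalar curvature $\mathcal R_h$ guaranteed by Assumption (A1); everything else is a direct consequence of Stone's theorem and the conjugation setup of Section 3.
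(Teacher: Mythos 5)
Your proposal is correct and follows essentially the same strategy as the paper: conjugate by $\sigma$ to reduce to the flow $e^{itH_M}$ on $M$, invoke Lemma \ref{lemma2} for the Sobolev mapping properties of $\sigma^{\pm1}$, use the squared-Dirac identity together with the boundedness of $\mathcal R_h$ to get uniform bounds in a higher Sobolev norm, and interpolate. The only minor variant is that you work with the graph norm of $H_M$ (equivalent to $H^1(M)$) whereas the paper introduces the auxiliary operator $\Lambda$ and bounds $e^{it\mathcal D}$ on $H^2(M)$; both yield the stated range $s\in[0,1]$, and your version sidesteps the implicit invertibility of $\Lambda$ used in the paper.
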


\begin{proof} Indeed, we have that
$$
\sigma^{-1} \mathcal D \sigma = \mathcal D_{\R^3} + m\beta + V
$$
therefore
$$
S(t) : = e^{it(\mathcal D_{\R^3} + m\beta + V)}  = e^{it(\sigma^{-1}\mathcal D \sigma)} = \sigma^{-1}e^{it\mathcal D}\sigma.
$$
We deduce 
$$
\|S(t)\|_{H^s(\R^3) \rightarrow \mathcal C(\R,H^s(\R^3))} \leq \|\sigma^{-1}\|_{H^{s}(M)\rightarrow H^s(\R^3)} \|e^{it\mathcal D}\|_{H^s(M) \rightarrow \mathcal C(\R,H^s(M))} \|\sigma\|_{H^s(\R^3) \rightarrow H^s(M)}.
$$
The last lemma ensures that $\|\sigma\|_{H^s(\R^3) \rightarrow H^s(M)}$ and $\|\sigma^{-1}\|_{H^{s}(M)\rightarrow H^s(\R^3)}$ are bounded. 

We recall that the scalar curvature is bounded and that
$$
\mathcal D^2 = -D^jD_j + \frac14 \mathcal R_h + m^2.
$$
Therefore,
$$
\Lambda : = \mathcal D^2 + \frac12 \|\mathcal R_h\|_{L^\infty(M)} - m^2
$$
is a positive operator that commutes with $\mathcal D$ and such that 
$$
\|\Lambda u\|_{L^2} \sim \|u\|_{H^2}.
$$
From the conservation of the $L^2$ norm under the flow of the Dirac equation we get for all $t\in \R$,
$$
\|e^{it \mathcal D} \|_{H^2\rightarrow H^2} \sim \|\Lambda e^{it\mathcal D}\Lambda^{-1}\|_{L^2\rightarrow L^2} = \|e^{it \mathcal D}\|_{L^2\rightarrow L^2} = 1.
$$
By interpolation, we get that for any $s \in [0,2]$, $t\mapsto e^{it\mathcal D}$ belongs to $H^s \rightarrow \mathcal C(\R, H^s)$.

We use this continuity of the flow $e^{it\mathcal D}$ to get
$$
\|S(t)\|_{H^s(\R^3) \rightarrow \mathcal C(\R,H^s(\R^3))} \lesssim \|\sigma^{-1}\|_{H^{s}(M)\rightarrow H^s(\R^3)} \|\sigma\|_{H^s(\R^3) \rightarrow H^s(M)} <\infty.
$$
\end{proof}

\begin{proof}(of Theorem \ref{teo1}) We focus on $m=0$, the case $m\neq 0$ being analogous. We write $S_n$ the restriction to $\mathcal P'_n = L^2(r^2dr ) \otimes \mathcal P_n$ of $S : t \mapsto S(t) = e^{it(\mathcal D_{\R^3} + m\beta + V)}$ We have that
$$
S_n(t) = e^{it(\mathcal D_{\R^3} + m\beta + V_n)}.
$$
Let $p> 2$, $q\geq 2$ such that $\frac1{p} + \frac1{q} = \frac12$. Let $I$ be a bounded interval of $\R$.
We have that 
$$
\|V_n\|_{H^{2/p} \rightarrow H^{2/p}} \leq C_\varphi \an n
$$
and 
$$
\|S_n\|_{H^{2/p}(\R^3) \rightarrow \mathcal C(\R, H^{2/p}(\R^3))} \leq \|S\|_{H^{2/p}(\R^3) \rightarrow \mathcal C(\R, H^{2/p}(\R^3))} \leq C_\varphi .
$$
Hence by Lemma \ref{boundpot}, we get for all $v_0 \in \mathcal P'_n \cap H^{2/p}(\R^3)$,
$$
\|S(t) v_0\|_{L^p_t(I)L^q(\R^3)} \leq C_\varphi \an n \|v_0\|_{H^{2/p}(\R^3)}.
$$

Let $f \in L^q(\R^3)$, we have that $\sigma^{2/q}f$ belongs to $L^q(M)$ and 
$$
\|\sigma^{2/q} f\|_{L^q(M)}^q = \int \sigma^{2} |f(r\omega)|^q \varphi^2(r)drd\omega = \int \sigma^{2- 2} |f(r\omega)|^q r^2drd\omega = \|f\|_{L^q(\R^3)}.
$$

Let $u_0 \in H^{2/p}(M)\cap L^2(\varphi^2(r)dr) \otimes \mathcal P_n$, we use that $e^{it\mathcal D} = \sigma S(t) \sigma^{-1}$ to get
$$
\|\sigma^{2/q -1}e^{it\mathcal D}u_0\|_{L^p_t(I)L^q(M)} =  \|\sigma^{2/q} S(t) \sigma^{-1} u_0\|_{L^p_t(I)L^q(M)}
$$
We use the last remark to get
$$
\|\sigma^{2/q -1}e^{it\mathcal D}u_0\|_{L^p_t(I)L^q(M)} =  \| S(t) \sigma^{-1} u_0\|_{L^p_t(I)L^q(\R^3)}
$$
We have that $\sigma^{-1}u_0 \in \mathcal P'_n \cap H^{2/p} (\R^3)$ because $\sigma^{-1}$ is an isometry from $L^2(\varphi^2(r)dr) \otimes \mathcal P_n$ to $\mathcal P'_n$ and is continuous from $H^{2/p}(M)$ to $H^{2/p}(\R^3)$.

Hence 
$$
\|\sigma^{2/q -1}e^{it\mathcal D}u_0\|_{L^p_t(I)L^q(M)} \leq C_\varphi \an n  \|  \sigma^{-1} u_0\|_{H^{2/p}(\R^3)}
$$
from which we get
$$
\|\sigma^{2/q -1}e^{it\mathcal D}u_0\|_{L^p_t(I)L^q(M)} \leq C_\varphi \an n  \|  u_0\|_{H^{2/p}(M)}
$$
\end{proof}

\section{Strichartz estimates with angular regularity: proof of Corollary \ref{cor1} in }

In this section we show how to extend the Strichartz estimates on partial wave subspaces proved in Theorem \ref{teo1} to general initial data; the main tool is given by the Littlewood-Paley decomposition on the sphere. We will deal separately with the case $m=0$ and $m\neq0$.

\subsection{The case $m=0$}
First, by interpolating Theorem \ref{teo1} and the conservation of the $L^2$ norm we get the following lemma.

\begin{lemma}\label{lem-interp1} Let $p>2$, $q\geq 2$ such that $\frac1{p} + \frac1{q} = \frac12$. Let $\varepsilon>0$. Let $I$ be a bounded interval of $\R$. There exists $C$ such that for all $u_0 \in L^2(\varphi^2(r) dr) \otimes \mathcal P_n\cap H^{2/p}(M)$ the solution $u$ to the linear Dirac equation \eqref{dir2} with initial datum $u_0$ satisfies 
$$
\|u\Big( \frac\varphi{r} \Big)^{2/p}\|_{L^p_t(I)L^q(M)} \leq C \an{n}^{2/p + \varepsilon} \|u_0\|_{H^{2/p}(M)}.
$$
\end{lemma}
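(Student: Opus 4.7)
The plan is to interpolate the estimate of Theorem~\ref{teo1} against the conservation of the $L^2$ norm under the Dirac flow. I will carry this out on the flat side, using the conjugation $v=\sigma^{-1}u$ introduced in Section~3, since the target and domain spaces become clean $L^p_tL^q(\mathbb{R}^3)$ and $H^{2/p}(\mathbb{R}^3)$ there.

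First, I would translate everything to $v$. Writing $u=(r/\varphi)v$ and using $2/p+2/q=1$, the extra factor one picks up is $(r/\varphi)^{q(1-2/p)}\varphi^2=(r/\varphi)^2\varphi^2=r^2$, which is exactly the Lebesgue density on $\mathbb{R}^3$; hence
$$
\|u(\varphi/r)^{2/p}\|_{L^q(M)}=\|v\|_{L^q(\mathbb{R}^3)}.
$$
Combined with Lemma~\ref{lemma2}, Theorem~\ref{teo1} applied at an auxiliary admissible pair $(p_1,q_1)$ with $p_1>2$ is then equivalent to
$$
\|S_n(t)v_0\|_{L^{p_1}_tL^{q_1}(\mathbb{R}^3)}\leq C_T\langle n\rangle\|v_0\|_{H^{2/p_1}(\mathbb{R}^3)},
$$
where $S_n$ is the restriction to $\mathcal P'_n$ of $S(t)=e^{it(\mathcal D_{\mathbb{R}^3}+m\beta+V)}$, exactly as in the proof of Theorem~\ref{teo1}. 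The $L^2$ conservation, which on the flat side reads $\|S_n(t)v_0\|_{L^\infty_tL^2(\mathbb{R}^3)}=\|v_0\|_{L^2(\mathbb{R}^3)}$, provides the companion bound with no $\langle n\rangle$ dependence.

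Next, I would invoke vector-valued Riesz--Thorin on the target spaces $L^\infty_tL^2$ and $L^{p_1}_tL^{q_1}$, together with complex interpolation of the Sobolev spaces $L^2=H^0$ and $H^{2/p_1}$ on the domain, at a parameter $\theta\in(0,1)$. Choosing $\theta=2/p+\varepsilon$ and $p_1=\theta p=2+p\varepsilon$, one has $p_1>2$ and $q_1<\infty$, so $(p_1,q_1)$ is indeed an admissible pair for Theorem~\ref{teo1}. A routine check shows that the interpolated exponents satisfy $1/p+1/q=1/2$, that the interpolated Sobolev index is exactly $2\theta/p_1=2/p$, and that the operator norm is controlled by
$$
1^{1-\theta}(C_T\langle n\rangle)^\theta\lesssim\langle n\rangle^{2/p+\varepsilon}.
$$

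Finally, transferring back to $u=\sigma v$ via Lemma~\ref{lemma2}, which compares $\|v_0\|_{H^{2/p}(\mathbb{R}^3)}$ with $\|u_0\|_{H^{2/p}(M)}$, produces the claimed estimate. The only technical point requiring care is ensuring that the complex interpolation applies simultaneously to the Strichartz and Sobolev scales for the flow restricted to the closed invariant subspace $\mathcal P'_n$; once the auxiliary exponent $p_1=2+p\varepsilon$ is chosen inside the admissible range, the interpolation is standard and I do not expect a substantial obstacle.
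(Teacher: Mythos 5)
Your argument is essentially the paper's own: interpolate the Theorem~\ref{teo1} bound at an auxiliary admissible pair $(p_1,q_1)$ against the trivial $L^2$ conservation, with interpolation weight $\theta = 2/p + \varepsilon$ chosen so that the interpolated Sobolev index is $2/p$ and the operator constant becomes $\langle n\rangle^\theta$. Your preliminary transfer to the flat side via $v=\sigma^{-1}u$ and the identity $\|u(\varphi/r)^{2/p}\|_{L^q(M)}=\|v\|_{L^q(\mathbb R^3)}$ is a genuine clarification: it removes the $r$-dependent weight, so that what remains is textbook complex interpolation of mixed $L^{p}_tL^{q}(\mathbb R^3)$ spaces with a Sobolev scale on the domain for the fixed operator $S_n(t)$ on the closed invariant subspace $\mathcal P'_n$, whereas the paper interpolates two estimates carrying different weights directly and leaves that step implicit.

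There is one small gap you should close. You require the interpolation parameter $\theta=2/p+\varepsilon$ to lie in $(0,1)$, but the Lemma places no upper bound on $\varepsilon$, so $\theta$ may be $\geq 1$ (for instance when $p$ is close to $2$ or $\varepsilon$ is large); then $p_1=\theta p$ is not an interior interpolant between $p$ and $\infty$ and the complex interpolation step does not apply. The paper disposes of this case in one line: if $\theta\geq 1$ then $\langle n\rangle\leq\langle n\rangle^{\theta}$ and the claimed estimate is already a direct consequence of Theorem~\ref{teo1} at $(p,q)$. You should either include this trivial case separately, or observe that it suffices to prove the Lemma for $\varepsilon$ small (replacing $\varepsilon$ by $\min(\varepsilon,(1-2/p)/2)$ costs nothing since the estimate only gets weaker as $\varepsilon$ increases).
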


\begin{proof} If $\theta := \frac2{p} + \varepsilon \geq 1$ then this is a direct consequence of Theorem \ref{teo1}. Otherwise, let $p_1 = \theta p$ and $\frac1{q_1} = \frac12 - \frac1{p_1} = \frac1\theta \Big( \frac1{q} - \frac{1-\theta}{2}\Big)$.

We have $\frac1{p} = \frac{1-\theta}{\infty} +  \frac{\theta }{p_1}$ and $\frac1{q} = \frac{1-\theta}{2} + \frac{\theta}{q_1}$. We can interpolate the conservation of the $L^2$ norm
$$
\|u\|_{L^\infty_t(I)L^2(M)} \leq  \|u_0\|_{L^2(M)}
$$
and
$$
\|u\Big( \frac\varphi{r} \Big)^{2/p_1}\|_{L^{p_1}_t(I)L^{q_1}(M)} \leq C \an{n} \|u_0\|_{H^{2/p_1}(M)}
$$
to get
$$
\|u\Big( \frac\varphi{r} \Big)^{2/p}\|_{L^p_t(I)L^q(M)} \leq C \an{n}^\theta \|u_0\|_{H^{2/p}(M)}
$$
which concludes the proof. \end{proof}

By summing over the decomposition in $\mathcal P_n$ and using Littlewood-Paley decomposition, we thus get the following lemma.

\begin{lemma}\label{lem-l2} Let $p>2$, $q\geq 2$ such that $\frac1{p} + \frac1{q} = \frac12$. Let $a,b > 0$ such that $b > \frac4{p}$ and $\frac{2}{pa} + \frac2{pb} < 1$. Let $I$ be a bounded interval of $\R$. There exists $C$ such that for all $u_0 \in H^{a,b}$ the solution $u$ to the linear Dirac equation \eqref{dir2} with initial datum $u_0$ satisfies 
$$
\|u\Big( \frac\varphi{r} \Big)^{2/p}\|_{L^p_t(I)L^q(M)} \leq C \|u_0\|_{H^{a,b}}.
$$
\end{lemma}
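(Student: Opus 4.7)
The plan is to promote the single-block Strichartz estimate of Lemma~\ref{lem-interp1} to a generic initial datum via a Littlewood--Paley decomposition on the sphere, and then to balance the cost in $N$ against the two regularities $a$ and $b$ of the norm $H^{a,b}$ by a careful interpolation.

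First I would observe that Lemma~\ref{lem-interp1} extends without modification from a single subspace $\mathcal P_n$ to any dyadic block $\bigoplus_{N/2 \leq n < N} \mathcal P_n$: its proof only uses the bound $\|V_n\|_{H^{2/p}\to H^{2/p}}\lesssim \langle n\rangle$ from Lemma~\ref{lemma1}, which remains $\lesssim \langle N\rangle$ uniformly on the block. Writing $P_N$ for the (smooth) orthogonal projector onto this block, $P_N$ commutes both with the linear flow (since $H$ preserves every $\mathcal P_n$) and with the radial weight $(\varphi/r)^{2/p}$. This gives
$$
\|(P_N u)(\varphi/r)^{2/p}\|_{L^p_t(I) L^q(M)} \leq C \langle N\rangle^{2/p+\varepsilon}\|P_N u_0\|_{H^{2/p}(M)}.
$$
The Littlewood--Paley square-function inequality on $\mathbb{S}^2$, combined with two applications of Minkowski (justified by $p,q\geq 2$), then yields
$$
\|u(\varphi/r)^{2/p}\|_{L^p_t(I) L^q(M)}^2 \lesssim \sum_{N\text{ dyadic}} \langle N\rangle^{4/p+2\varepsilon}\|P_N u_0\|_{H^{2/p}(M)}^2.
$$

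Next I would compare this right-hand side to $\|u_0\|_{H^{a,b}}^2$. Because the angular Laplacian has eigenvalues of size $N^2$ on the block,
$$
\|P_N u_0\|_{H^{2/p}(M)}^2 \sim \langle N\rangle^{4/p}\|P_N u_0\|_{L^2(M)}^2 + \|P_N u_0\|_{H^{2/p}_{\mathrm{rad}}}^2,
$$
where $H^s_{\mathrm{rad}}$ denotes $s$ derivatives in $r$ with $L^2$-integration in $\omega$. The hypothesis $2/(pa)+2/(pb)<1$ forces $a>2/p$, so interpolation in the radial variable gives
$$
\|P_N u_0\|_{H^{2/p}_{\mathrm{rad}}} \leq \|P_N u_0\|_{L^2(M)}^{1-2/(pa)}\|P_N u_0\|_{H^{a}_{\mathrm{rad}}}^{2/(pa)}.
$$

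Finally, inserting this into the sum, I would split it into a purely angular piece $\sum_N \langle N\rangle^{8/p+2\varepsilon}\|P_N u_0\|_{L^2(M)}^2$ and a mixed piece $\sum_N \langle N\rangle^{4/p+2\varepsilon}\|P_N u_0\|_{L^2(M)}^{2-4/(pa)}\|P_N u_0\|_{H^a_{\mathrm{rad}}}^{4/(pa)}$. The first is absorbed by the angular weight $\langle k_j\rangle^{2b}$ appearing in $\|u_0\|_{H^{a,b}}^2$ provided $8/p+2\varepsilon\leq 2b$, i.e.\ exactly the first hypothesis $b>4/p$ (with $\varepsilon$-slack). The second is handled by H\"older with exponents $(1-2/(pa))^{-1}$ and $pa/2$, leaving a constraint $(4/p+2\varepsilon)/(1-2/(pa))\leq 2b$, which rearranges \emph{precisely} to the second hypothesis $2/(pa)+2/(pb)<1$. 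The main delicacy lies in the very first step: summing Lemma~\ref{lem-interp1} individually over the $\sim N$ values of $n$ within a block by the triangle inequality would incur an $N^{1/2}$ loss from Cauchy--Schwarz and force the suboptimal condition $b>4/p+1/2$; the sharp condition $b>4/p$ is recovered only by applying the potential-perturbation Strichartz estimate at the block level rather than partial-wave-by-partial-wave.
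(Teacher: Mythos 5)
Your proof follows essentially the same route as the paper's: a Littlewood--Paley decomposition on the sphere, a Minkowski step to exchange the dyadic $\ell^2$ sum with the $L^p_t L^q$ norm, a block-level application of the single-$\mathcal P_n$ estimate from Lemma~\ref{lem-interp1}, and a Young/H\"older-type balancing of the angular and radial weights to land in $H^{a,b}$. The one place where you go beyond the paper's exposition is your explicit justification that the Strichartz estimate must be applied directly on each dyadic block (using that the potential $V$ is block-diagonal across the $\mathcal H_{j,m_j,k_j}$ so its operator norm on the block is still $\lesssim N$), since applying Lemma~\ref{lem-interp1} partial-wave-by-partial-wave and summing would lose a factor $N^{1/2}$ and force $b>\frac4p+\frac12$; the paper applies the block-level estimate tacitly, so this is a genuine and useful clarification rather than a divergence of method.
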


\begin{proof} We use Littlewood-Paley theory on the sphere. We refer essentially to Chapter 3.4 in \cite{daixu} and in particular Theorem 3.4.2. We recall that $\mathcal S_n$ are the spherical harmonics of degree $n$ and that $\mathcal S_n$ is included in $\mathcal P_n$. For $j\in \N$, we introduce $\pi_j$ be the orthogonal projection over 
$$
\tilde{\mathcal S}_j =\bigoplus_{n\in [2^j,2^{j+1}[}\mathcal S_n
$$ 
and $p_j$ be the orthogonal projection over 
$$
\tilde{\mathcal P}_j = \bigoplus_{n\in [2^j,2^{j+1}[}\mathcal P_n.
$$
By convention, $\tilde{\mathcal S}_{-1} = \mathcal S_0$ and $\pi_{-1}$ is the projection on $\tilde{\mathcal S}_{-1}$ and we define similarly $\tilde{\mathcal P}_{-1} = \mathcal P_0$, and $p_{-1}$ to be the orthogonal projection on $\tilde{\mathcal P}_{-1}$.

We have for $u_0 \in H^{a,b}$,
$$
\|u\|_{L^p_t(I)L^q(M)}= \big \| \; \|u(t,r)\|_{L^q(\mathbb{S}^2)}\big\|_{L^p_t(I)L^q(\varphi^2dr)}.
$$
Since $q\geq 2$, by Littlewood-Paley theory, we get
$$
\|u\|_{L^p_t(I)L^q(M)} \lesssim \big \| \; \Big( \sum_{j \in \{-1\}\cup \N} \|\pi_j u(t,r)\|_{L^q(\mathbb{S}^2)}^2 \Big)^{1/2}\big\|_{L^p_t(I)L^q(\varphi^2dr)}.
$$
We use that $\mathcal S_n$ is included in $\mathcal P_n$ and thus $\tilde{\mathcal S}_j \subseteq \tilde{\mathcal P}_j$ to get
$$
\pi_j u(t,r) = \pi_j p_j u(t,r).
$$
We also recall thanks to Littlewood-Paley theory that
$$
\|\pi_j p_j u(t,x)\|_{L^q(\mathbb{S}^2)} \leq \big \| \Big( \sum_k |\pi_k p_j u(t,x)|^2\Big)^{1/2}\big\|_{L^q(\mathbb{S}^2)} \lesssim \|p_j u(t,r)\|_{L^q}.
$$
Since $p$ and $q$ are bigger than $2$, by Minkowski inequality, we have 
$$
\|u\|_{L^p_t(I)L^q(M)} \lesssim   \Big( \sum_{n \in \N} \|p_j u(t,r)\|_{L^p_t(I)L^q(M)}^2 \Big)^{1/2}.
$$
We use the last lemma to get, for any $\varepsilon >0$, 
$$
\|u\|_{L^p_t(I)L^q(M)} \lesssim   \Big( \sum_{n \in \N} \an{2^j}^{4/p + \varepsilon}\|p_n u_0(r)\|_{H^{2/p}(M)}^2 \Big)^{1/2}.
$$
Let $\Lambda_r = \sqrt{1 - \frac1{\varphi^2(r)} \partial_r \varphi^2(r) \partial_r}$, we have 
$$
\|p_j u_0(r)\|_{H^{2/p}}^2  \leq \an{2^j}^{4/p} \|u_0\|_{L^2(M)}^2 + \|\Lambda_r^{2/p}u_0\|_{L^2(M)}^2.
$$
Since 
$$
x^{4/p + \varepsilon } y^{4/p} \leq x^{2b}+y^{2a}
$$
as soon as $\frac1{b}\Big( 2/p + \varepsilon/2) + \frac{1}{a}2/p \leq 1$, we get that for all $b > 4/p$ and 
$
\frac2{p}\Big( \frac1{a} + \frac1{b}\Big) < 1
$
we have
$$
\|u\|_{L^p_t(I)L^q(M)}\lesssim   \Big( \sum_{j \in \{-1\}\cup \N} \|p_j u_0\|_{H^{a,b}}^2 \Big)^{1/2},
$$
and since $\mathcal P_n = \mathcal H_{n-1/2} + \mathcal H_{n+1/2}$ and $L^2(\mathbb{S}^2) = \bigoplus_j \mathcal H_j$, we get
$$
\|u\|_{L^p_t(I)L^q(M)} \lesssim \|u_0\|_{H^{a,b}}.
$$

\end{proof}

\subsection{The case $m\neq 0$}

One advantage with respect to the case $m=0$ is that we have the end point : for all $u_0 \in L^2(\varphi(r)^2dr) \otimes \mathcal P_n \cap \mathcal H^{1/2}(M)$, the solution $u$ of the linear Dirac equation satisfies 
$$
\|\Big(\frac{\varphi(r)}{r}\Big)^{2/3} u\|_{L^2_t(\R)L^6(M)} \lesssim \an n \|u_0\|_{H^{1/2}}.
$$
Hence, by interpolation Theorem \ref{teo1} and the conservation of the $L^2$ norm we get the following lemma.

\begin{lemma}\label{lem-interp2} Let $p\geq 2$, $q\geq 2$ such that $\frac2{p} + \frac3{q} = \frac32$. Let $I$ be a bounded interval of $\R$. There exists $C$ such that for all $u_0 \in  L^2(\varphi^2(r) dr) \otimes \mathcal P_n\cap H^{1/p}(M)$ the solution $u$ to the linear Dirac equation \eqref{dir2} with initial datum $u_0$ satisfies 
$$
\|u\Big( \frac\varphi{r} \Big)^{1- \frac2{q}}\|_{L^p_t(I)L^q(M)} \leq C \an{n}^{2/p } \|u_0\|_{H^{1/p}}.
$$
\end{lemma}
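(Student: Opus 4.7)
The plan is to obtain this estimate by Riesz--Thorin interpolation between exactly two inputs: the endpoint $(p_0,q_0)=(2,6)$ already recorded just above the lemma, namely
$$
\bigl\|u(\varphi/r)^{2/3}\bigr\|_{L^2_t(I)L^6(M)} \lesssim \langle n\rangle\,\|u_0\|_{H^{1/2}(M)},
$$
and the trivial $L^2$-conservation endpoint $(p_1,q_1)=(\infty,2)$,
$$
\|u\|_{L^\infty_t(I)L^2(M)} \le \|u_0\|_{L^2(M)},
$$
which is consistent with the statement since at $q=2$ the weight $(\varphi/r)^{1-2/q}$ is identically $1$ and $H^{1/p}=L^2$ when $p=\infty$.

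Given $(p,q)$ with $\tfrac{2}{p}+\tfrac{3}{q}=\tfrac{3}{2}$ and $p\in[2,\infty]$, I set $\theta=2/p\in[0,1]$; then $\tfrac{1}{p}=\tfrac{\theta}{2}+\tfrac{1-\theta}{\infty}$ and $\tfrac{1}{q}=\tfrac{\theta}{6}+\tfrac{1-\theta}{2}$, and a direct computation gives $1-\tfrac{2}{q}=\tfrac{2\theta}{3}$, matching exactly the power of the weight appearing on the left-hand side after interpolation. To handle the $\theta$-dependent weight cleanly, I consider the analytic family
$$
T_z u_0 \;=\; \bigl(e^{it\mathcal D}u_0\bigr)\bigl(\varphi(r)/r\bigr)^{2z/3}, \qquad \mathrm{Re}\,z\in[0,1],
$$
acting on the complex interpolation scale $\{H^{z/2}(M)\}$ of Sobolev spaces, which is well defined because $\varphi/r$ is a smooth positive bounded function under Assumption (A1). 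The endpoint bounds above give $\|T_{it}\|_{H^{it/2}\to L^\infty_t L^2}\lesssim 1$ and $\|T_{1+it}\|_{H^{(1+it)/2}\to L^2_t L^6}\lesssim\langle n\rangle$ uniformly in $t\in\R$ (the imaginary shifts only produce bounded Fourier multipliers on $H^s(M)$, using the smoothness of $\mathcal D$ on $M$ as in Lemma \ref{lemma3}). Stein's interpolation theorem then yields
$$
\bigl\|u(\varphi/r)^{1-2/q}\bigr\|_{L^p_t(I)L^q(M)} \;\lesssim\; \langle n\rangle^{\theta}\,\|u_0\|_{H^{\theta/2}(M)} \;=\; \langle n\rangle^{2/p}\,\|u_0\|_{H^{1/p}(M)},
$$
which is exactly the claim.

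The only point requiring minor care is the weighted nature of the interpolation: one cannot directly apply scalar Riesz--Thorin because the weight $(\varphi/r)^{1-2/q}$ itself depends on the interpolation parameter. This is the reason for invoking Stein's analytic interpolation instead, which is the standard remedy and costs nothing in the constant because $\varphi/r$ and its logarithm are bounded on $(0,\infty)$ by Assumption (A1). Once this is set up, everything else is verification of indices, exactly as in Lemma \ref{lem-interp1}. I expect no serious obstacle; the argument is the massive analogue of that lemma, made easier by the availability of a sharp endpoint $p=2$ in the massive Strichartz family.
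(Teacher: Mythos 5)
Your proposal is correct and follows essentially the same route the paper has in mind: interpolate the endpoint $p=2$ Strichartz estimate (which is available in the massive case by Theorem~\ref{teo1}) against the trivial $L^\infty_t L^2$ bound from conservation of the $L^2$ norm, with the exponent bookkeeping exactly as you lay it out. Two small remarks. First, the assertion that $\varphi/r$ is bounded is not guaranteed by Assumption (A1) (e.g.\ $\varphi(r)=\sinh r$ satisfies (A1) but $\varphi/r$ is unbounded); this is harmless for your argument since on the vertical lines $\Re z=0,1$ the imaginary part of the exponent contributes only a unimodular factor $|(\varphi/r)^{2it/3}|=1$, which is all that Stein's theorem requires, but the claim as stated should be dropped. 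Second, Stein analytic interpolation is somewhat heavier than needed here: using the conjugation $e^{it\mathcal D}=\sigma\,S(t)\,\sigma^{-1}$ and the identity $\|\sigma^{2/q}f\|_{L^q(M)}=\|f\|_{L^q(\R^3)}$ from the proof of Theorem~\ref{teo1}, the left-hand side becomes $\|S(t)\sigma^{-1}u_0\|_{L^p_t(I)L^q(\R^3)}$ with no weight at all, and then one interpolates the fixed operator $v_0\mapsto S(\cdot)v_0$ between $H^{1/2}(\R^3)\to L^2_t L^6(\R^3)$ (bound $\langle n\rangle$) and $L^2(\R^3)\to L^\infty_t L^2(\R^3)$ (bound $1$) by ordinary complex interpolation of the couples; the $\theta$-dependent weight appears only afterward, when translating back to $M$. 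Either way the constants and exponents come out the same, so your proof is sound; the second route is just the one most consistent with the machinery the paper already set up.
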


By summing over the decomposition in $\mathcal P_n$ and using Littlewood-Paley decomposition, we thus get the following lemma.

\begin{lemma}\label{lem-l2mnot0} Let $p\geq 2$, $q\geq 2$ such that $\frac2{p} + \frac3{q} = \frac32$. Let $a,b > 0$ such that $b \geq \frac3{p}$ and $\frac{1}{pa} + \frac2{pb} \leq 1$. Let $I$ be a bounded interval of $\R$. There exists $C$ such that for all $u_0 \in H^{a,b}$ the solution $u$ to the linear Dirac equation \eqref{dir2} with initial datum $u_0$ satisfies 
$$
\|u\Big( \frac\varphi{r} \Big)^{1-2/q}\|_{L^p_t(I)L^q(M)} \leq C \|u_0\|_{H^{a,b}}.
$$
\end{lemma}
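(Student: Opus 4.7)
The proof mirrors that of Lemma \ref{lem-l2} step for step; the only real difference is numerological and reflects the sharper loss $\an n^{2/p}$ (no $\varepsilon$) available in Lemma \ref{lem-interp2}, together with the fact that in the massive Strichartz pair the regularity index on the initial datum is $1/p$ rather than $2/p$.

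First, I would apply Littlewood--Paley decomposition on $\mathbb{S}^2$ as in the previous proof. Writing $\pi_j$ for the projector onto $\tilde{\mathcal S}_j = \bigoplus_{n\in[2^j,2^{j+1})}\mathcal S_n$ and $p_j$ for the projector onto $\tilde{\mathcal P}_j = \bigoplus_{n\in[2^j,2^{j+1})}\mathcal P_n$, the inclusion $\mathcal S_n\subseteq \mathcal P_n$ gives $\pi_j u = \pi_j p_j u$, and the usual square-function bound on $\mathbb S^2$ for $q\geq 2$ yields $\|\pi_j p_j u\|_{L^q(\mathbb S^2)}\lesssim \|p_j u\|_{L^q(\mathbb S^2)}$. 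Since $p,q\geq 2$, a Minkowski exchange then delivers
$$
\Big\|u\Big(\frac{\varphi(r)}{r}\Big)^{1-2/q}\Big\|_{L^p_t(I)L^q(M)}^2 \lesssim \sum_{j\in\{-1\}\cup\N}\Big\|(p_j u)\Big(\frac{\varphi(r)}{r}\Big)^{1-2/q}\Big\|_{L^p_t(I)L^q(M)}^2.
$$

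Second, since $p_j u_0$ belongs to $L^2(\varphi^2 dr)\otimes \mathcal P_n$ with $n\sim 2^j$, I plug in Lemma \ref{lem-interp2} piece by piece, which costs exactly $\an{2^j}^{2/p}$ and no more, producing
$$
\Big\|u\Big(\frac{\varphi(r)}{r}\Big)^{1-2/q}\Big\|_{L^p_t(I)L^q(M)}^2 \lesssim \sum_j \an{2^j}^{4/p}\, \|p_j u_0\|_{H^{1/p}(M)}^2.
$$
I then split the radial--angular content of $H^{1/p}(M)$ as in Lemma \ref{lem-l2}, namely
$$
\|p_j u_0\|_{H^{1/p}(M)}^2 \lesssim \an{2^j}^{2/p}\|p_j u_0\|_{L^2(M)}^2 + \|\Lambda_r^{1/p} p_j u_0\|_{L^2(M)}^2,
$$
where $\Lambda_r = \sqrt{1-\varphi^{-2}\partial_r\varphi^2\partial_r}$ is the radial operator introduced in the massless argument.

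Third, I dispose of each of the two pieces. The purely angular contribution becomes $\sum_j \an{2^j}^{6/p}\|p_j u_0\|_{L^2(M)}^2$, which is controlled by $\|u_0\|_{H^{a,b}}^2$ as soon as $b\geq 3/p$. For the radial piece I interpolate
$$
\|\Lambda_r^{1/p} p_j u_0\|_{L^2}^2 \leq \|p_j u_0\|_{L^2}^{2(1-1/(pa))}\,\|\Lambda_r^{a} p_j u_0\|_{L^2}^{2/(pa)},
$$
then apply Young in the form $X^{1-1/(pa)}Y^{1/(pa)}\leq X+Y$ to $X=\an{2^j}^{2b}\|p_j u_0\|_{L^2}^2$, $Y=\|\Lambda_r^a p_j u_0\|_{L^2}^2$; this absorbs the prefactor $\an{2^j}^{4/p}$ precisely when $4/p \leq 2b(1-1/(pa))$, i.e.\ when $\frac{1}{pa}+\frac{2}{pb}\leq 1$, which is exactly the hypothesis. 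Summing in $j$ and using the definition of the $H^{a,b}$ norm concludes the proof.

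No serious obstacle is expected: the argument is essentially a direct transcription of the one used for $m=0$, and the two sharper exponents in the statement (equality allowed in $b\geq 3/p$ and in $\frac{1}{pa}+\frac{2}{pb}\leq 1$) are exactly what the improved loss $\an{n}^{2/p}$ and the smaller Sobolev exponent $1/p$ (rather than $2/p$) buy us. The only step where one must be a little careful is the Minkowski exchange, which needs $p,q\geq 2$; both are assumed.
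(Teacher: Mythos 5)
Your proof is correct and follows the same route as the paper: Littlewood--Paley decomposition on the sphere via $\pi_j$ and $p_j$, Minkowski's inequality to pull out the square-function sum, Lemma~\ref{lem-interp2} frequency-by-frequency, and finally the split of the $H^{1/p}$ norm into angular and radial contributions absorbed by the $H^{a,b}$ norm via Young's inequality. If anything your write-up is slightly cleaner: you spell out the interpolation $\|\Lambda_r^{1/p}p_j u_0\|_{L^2}^2 \leq \|p_j u_0\|_{L^2}^{2(1-1/(pa))}\|\Lambda_r^a p_j u_0\|_{L^2}^{2/(pa)}$ and the explicit Young step, whereas the paper compresses this into the somewhat terse statement $x^{4/p}y^{2/p}\leq x^{2b}+y^{2a}$ without identifying $x$ and $y$; your version also correctly keeps $p_j u_0$ (rather than $u_0$) on the right-hand side of the $H^{1/p}$ splitting, fixing a small typo in the paper. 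The condition derivations ($b\geq 3/p$ and $\frac1{pa}+\frac2{pb}\leq 1$) match, and your closing remark correctly traces the relaxed (non-strict) inequalities to the absence of the $\varepsilon$-loss and the lower Sobolev exponent $1/p$ in the massive case.
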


\begin{proof} We use the same notations as in the proof of Lemma \ref{lem-l2}. We have by definition:
$$
\|u\|_{L^p_t(I)L^q(M)} = \big \| \; \|u(t,r)\|_{L^q(\mathbb{S}^2)}\big\|_{L^p(I)L^q(\varphi^2dr)}.
$$
Since $q\geq 2$, we get by Littlewood-Paley theory
$$
\|u\|_{L^p_t(I)L^q(M)}\lesssim \big \| \; \Big( \sum_{j \in \{-1\}\cup\N} \|\pi_j u(t,r)\|_{L^q(\mathbb{S}^2)}^2 \Big)^{1/2}\big\|_{L^p(I)L^q(\varphi^2dr)}.
$$
We use that $\mathcal S_n$ is included in $\mathcal P_n$ to get
$$
\pi_j u(t,r) = \pi_j p_j u(t,r).
$$
We also recall thanks to Littlewood Paley theory that
$$
\|\pi_j p_j u(t,x)\|_{L^q(\mathbb{S}^2)}  \lesssim \|p_j u(t,r)\|_{L^p}.
$$
Since $p$ and $q$ are bigger than $2$, by Minkowski inequality, we have
$$
\|u\|_{L^p_t(I)L^q(M)} \lesssim   \Big( \sum_{j \in \{-1\}\cup\N} \|p_j u(t,r)\|_{L^p_t(I)L^q(M)}^2 \Big)^{1/2}.
$$
We use Lemma \ref{lem-interp2} to get
$$
\|u\|_{L^p_t(I)L^q(M)}\lesssim   \Big( \sum_{j \in \{-1\}\cup\N} \an{2^j}^{4/p }\|p_n u_0(r)\|_{H^{1/p}}^2 \Big)^{1/2}.
$$
We have 
$$
\|p_n u_0(r)\|_{H^{1/p}}^2  \leq \an{2^j}^{2/p} \|u_0\|_{L^2(M)}^2 + \|\Lambda_r^{1/p}u_0\|_{L^2(M)}^2.
$$
Since 
$$
x^{4/p } y^{2/p} \leq x^{2b}+y^{2a}
$$
as soon as $\frac2{pb} + \frac{1}{ap} \leq 1$, we get that for all $b \geq  3/p$ and 
$$
\frac1{p}\Big( \frac1{a} + \frac2{b}\Big) \leq 1
$$
we have
$$
\|u\|_{L^p_t(I)L^q(M)} \lesssim   \Big( \sum_{j \in \{-1\}\cup\N} \|p_j u_0\|_{H^{a,b}}^2 \Big)^{1/2}.
$$
and since $\mathcal P_n = \mathcal H_{n-1/2} + \mathcal H_{n+1/2}$ and $L^2(\mathbb{S}^2) = \bigoplus_j \mathcal H_j$, we get
$$
\|u\|_{L^p_t(I)L^q(M)} \lesssim \|u_0\|_{H^{a,b}}.
$$

\end{proof}

\section{A nonlinear application}

This section will be devoted to the proofs of Theorem \ref{teo3} and Corollary \ref{corsolmod}, which will be based on a standard contraction argument relying on Strichartz estimates. In this whole section, we assume $\inf \frac{\varphi(r)}{r} > 0$.

\subsection{Proof of Theorem \ref{teo3}}

Let us begin with the following Lemma.
\begin{lemma}\label{lem-striwithderm0} Let $p>2$, $q\geq 2$ such that $\frac1{p}  +\frac1{q} = \frac12$. Take $\kappa \in [0,\frac4{q})$ and $a,b>\kappa$ such that $\frac2{p(a-\kappa)} + \frac2{p(b-\kappa)} < 1$, $b-\kappa>\frac4{p}$. Let $I$ be a bounded interval of $\R$. There exists $C$ such that for all $u_0 \in H^{a,b}$ the solution $u$ to the Dirac equation with initial datum $u_0$ satisfies :
$$
\big\|u\|_{L^p_t(I)W^{\kappa,q}(M)} \leq C \|u_0\|_{H^{a,b}}
$$
\end{lemma}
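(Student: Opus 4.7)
The strategy is to convert the extra $\kappa$ units of regularity allowed by the hypotheses $(a-\kappa,b-\kappa)$ into $\kappa$ covariant derivatives on the left-hand side, thereby reducing to Lemma~\ref{lem-l2}. The first observation is that, under $\inf\varphi/r>0$, the weight $(\varphi/r)^{2/p}$ in Lemma~\ref{lem-l2} is bounded below by a positive constant, so that lemma immediately upgrades to the unweighted Strichartz bound $\|v\|_{L^p_t(I)L^q(M)}\le C\|v_0\|_{H^{\tilde a,\tilde b}}$ whenever $\tilde b>4/p$ and $\tfrac{2}{p\tilde a}+\tfrac{2}{p\tilde b}<1$. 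We will apply this with $\tilde a = a-\kappa$, $\tilde b = b-\kappa$, which is precisely what the hypotheses of the present lemma grant us.

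I would then mimic the proof of Lemma~\ref{lem-l2}: Littlewood--Paley decompose $u$ on the sphere into dyadic angular bands $p_j u$ supported in $\tilde{\mathcal P}_j = \bigoplus_{n\in[2^j,2^{j+1})}\mathcal P_n$, and use Minkowski (since $p,q\ge 2$) to pull the $\ell^2$-sum outside the mixed norm. On each band, a Bernstein-type inequality on the sphere yields
$$
\|p_j u\|_{W^{\kappa,q}(M)} \lesssim 2^{j\kappa}\,\|p_j u\|_{L^q(M)} + \|\Lambda_r^\kappa p_j u\|_{L^q(M)},
$$
splitting the covariant $\kappa$-derivative into an angular contribution, which acts as multiplication by $\sim\langle n\rangle^\kappa$ on $\mathcal P_n$ (using $1/\varphi\lesssim 1/r$ from $\inf\varphi/r>0$ to compare angular derivative norms), and a radial one built from $\Lambda_r$ as in Lemma~\ref{lem-l2}; this is where the condition $\kappa<4/q$ enters. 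One then applies the unweighted Strichartz estimate of the first step to $p_j u$ (whose initial datum is $p_j u_0$, since partial waves are flow-invariant) and to $\Lambda_r^\kappa p_j u$ (whose initial datum is $\Lambda_r^\kappa p_j u_0$ up to a bounded commutator with the radial term $k_j/\varphi(r)$ appearing in \eqref{raddirsigma}), and interpolates radial against angular regularity exactly as in Lemma~\ref{lem-l2}, now with $(a-\kappa,b-\kappa)$ in place of $(a,b)$. This delivers
$$
\|u\|_{L^p_t(I)W^{\kappa,q}(M)} \lesssim \Big(\sum_j \|p_j u_0\|_{H^{a,b}}^2\Big)^{1/2} \lesssim \|u_0\|_{H^{a,b}},
$$
as desired.

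The main obstacle is controlling the commutator $[\Lambda_r^\kappa, k_j/\varphi(r)]$ so that $\Lambda_r^\kappa p_j u$ can indeed be treated as (a controlled perturbation of) a genuine solution of the Dirac flow. This should be tractable because, under Assumption~A, the function $1/\varphi$ and its derivatives are bounded (as already exploited in Lemma~\ref{lemma1}), so standard fractional Leibniz / Kato--Ponce estimates in the radial variable supply a commutator bound that is uniform in $k_j$ after the normalization $k_j/\langle n\rangle = O(1)$. The remaining bookkeeping—verifying that the exponent constraints on $a-\kappa$ and $b-\kappa$ are exactly what makes the dyadic sum converge—is identical to the interpolation step carried out at the end of the proof of Lemma~\ref{lem-l2}.
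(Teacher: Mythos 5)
Your plan is a genuinely different route from the paper's, and as written it has gaps. The paper dispenses with the lemma in a few lines by interpolation: setting $\theta=\kappa/2$, it applies the $\kappa=0$ case (Corollary~\ref{cor1}, made unweighted using $\inf\varphi/r>0$) at the auxiliary exponents $p_1=(1-\theta)p$, $\frac1{q_1}+\frac1{p_1}=\frac12$, $a_1=(1-\theta)^{-1}(a-\kappa)$, $b_1=(1-\theta)^{-1}(b-\kappa)$, and then interpolates the resulting $L^{p_1}_tL^{q_1}$ Strichartz bound with the trivial energy estimate $\|e^{it\mathcal D}u_0\|_{L^\infty_tH^2(M)}\lesssim\|u_0\|_{H^{2,2}}$. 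The hypothesis $\kappa<4/q$ is precisely what guarantees $p_1>2$, and the hypotheses on $a-\kappa,b-\kappa$ reduce exactly to the admissibility conditions for $(a_1,b_1)$ in Lemma~\ref{lem-l2}.

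The main gaps in your proposal are the following. First, the covariant $W^{\kappa,q}(M)$ norm for spinors does not obviously split into an angular Bernstein piece $2^{j\kappa}\|p_ju\|_{L^q}$ plus a purely radial piece $\|\Lambda_r^\kappa p_ju\|_{L^q}$: the spin connection $\Gamma_j$, which contains $\varphi'(r)$ and $\cot\theta$, couples the two directions, as does the $r$-dependent weight $1/\varphi$ multiplying $\mathcal D_{\mathbb S^2}$, so this splitting needs a proof you do not supply. Second, to treat $\Lambda_r^\kappa p_ju$ as a solution of a perturbed Dirac flow you need to control the commutator $[\Lambda_r^\kappa,k_j/\varphi(r)]$; you identify this as the key obstacle but only assert that it "should be tractable." For fractional $\kappa$ this commutator is not a differential operator, and its growth in $k_j$ (hence in $2^j$) must be established, since the dyadic summation in Lemma~\ref{lem-l2} is already tight and an uncontrolled power of $\langle 2^j\rangle$ would break it. Third, you attribute the constraint $\kappa<4/q$ to the Bernstein step, but Bernstein on the sphere holds for all $\kappa\geq 0$; in the paper's argument $\kappa<4/q$ is exactly what keeps the auxiliary exponent $p_1=(1-\kappa/2)p$ above $2$, and your argument does not visibly produce this bound. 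All three obstructions vanish if you switch to the paper's interpolation.
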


\begin{proof} The case $\kappa = 0$ is a consequene of Corollary \ref{cor1} and of the existence of $\alpha$ such that $\varphi(r) \geq \alpha r$. Assume $\kappa > 0$. Take $\theta = \frac{\kappa}{2} \in (0,1)$. Define $p_1$, $q_1$, $a_1$ and $b_1$ such that
\begin{eqnarray*}
p_1 &=& (1-\theta) p  \\
\frac{1}{q_1} + \frac1{p_1} &=& \frac12 \\
a_1 &=& (1-\theta)^{-1}(a-\kappa) \\
b_1 &=& (1-\theta)^{-1} (b-\kappa).
\end{eqnarray*}
We have $p_1 > \Big( 1-\frac2{q}\Big)p = 2$ and 
$$
\frac1{a_1} + \frac1{b_1} = (1-\theta) \Big( \frac{1}{a-\kappa} + \frac1{b-\kappa}\Big) < (1-\theta) \frac{p}{2} = \frac{p_1}{2}
$$
and finally
$$
b_1  = (1-\theta)^{-1}(b-\kappa) > (1-\theta)^{-1}\frac{4}{p} = \frac4{p_1}.
$$
Thus we have 
\begin{equation}\label{intermstri}
\|e^{it\mathcal D}u_0\|_{L^{p_1}_t(I)L^{q_1}(M)} \lesssim \|u_0\|_{H^{a_1,b_1}}.
\end{equation}

What is more, we have 
\begin{eqnarray*}
\frac1{p} &=& \frac{1-\theta}{p_1} + \frac{\theta}{\infty} \\
\frac1{q} &=& \frac{1-\theta}{q_1} + \frac\theta{2} \\
\sigma  &=& \theta 2 \\
a &=& (1-\theta)a_1 + \theta 2 \\
b&=& (1-\theta)b_1 + \theta 2
\end{eqnarray*}
therefore, we can interpolate \eqref{intermstri} and 
$$
\|e^{it\mathcal D}u_0\|_{L^\infty_t(I)H^2(M)} \lesssim \|u_0\|_{H^{2,2}(M)}
$$
to get
$$
\|e^{it\mathcal D}u_0\|_{L^p_t(I)W^{\sigma,q}(M)} \lesssim \|u_0\|_{H^{a,b}}.
$$
\end{proof}

\begin{remark} When $a < 2$ interpolating the continuity of $e^{it\mathcal D}$ from $H^2$ to $\mathcal C(\R,H^2)$ and from $H^{0,b_1}$ to $\mathcal C(\R, H^{0,b_1}) $ ($\mathcal D$ and ${\mathcal{D}_{\mathbb{S}^2}}$ commute), we get the continuity of $e^{it\mathcal D}$ from $H^{a,b}$ to $\mathcal C(\R, H^{a,b})$.
\end{remark}

\begin{proof}[Proof of Theorem \ref{teo3}. ] Let $s_1 = \frac32 - \frac3{r'}$ and $a,b > s_1$ satisfying :
$$
\frac2{r'} \Big(\frac1{a-s_1} + \frac1{b-s_1}\Big) < 1 \; ,\; b> \frac1{r'} + \frac32.
$$
Let $f : [r',P) \rightarrow \R_+$ with $P = \frac6{3-2a}$ if $a< \frac32$, $P=\infty$ otherwise, defined as
$$
f(p) = \frac2{p} \Big(\frac1{a-s_1(p)} + \frac1{b-s_1(p)}\Big)
$$
such that $s_1(p ) = \frac32 - \frac3{p}$. Since $a$ and $b$ are strictly bigger than $s_1$, the map $f$ is continuous at $r'$ and besides $p \mapsto s_1(p)$ is continuous on $[r',\infty)$. Thus there exists $p > r'$ such that
$$
\frac2{p} \Big(\frac1{a-s_1(p)} + \frac1{b-s_1(p)}\Big) < 1 
$$
and $b> \frac1{p} + \frac32$.

This implies that $b-s_1(p) = b-\frac32 + \frac3{p} > \frac4{p}$. By continuity of 
$$
\kappa \mapsto \Big( \frac1{a-\kappa} + \frac1{b-\kappa}, b-\kappa\Big)
$$
at $s_1(p)$, we get that there exists $\kappa \in (s_1(p), \frac43 s_1(p))$ such that
$$
\frac2{p} \Big(\frac1{a-\kappa} + \frac1{b-\kappa}\Big) < 1 \textrm{ and } b-\kappa > \frac4{p}.
$$
Let $q$ be given by $ \frac1{q} + \frac1{p} = \frac1{2}$ (which is possible since $p> r'\geq 2$), we have $q\geq 2$ and $s_1(p) = \frac3{q}$, that is $\kappa \in (\frac3{q}, \frac4{q})$.

Denoting $S(t)$ the flow of the linear Dirac equation we get the Strichartz estimate:
$$
\big\|S(t) u_0\|_{L^p(I,W^{\kappa,q}(M))} \lesssim \|u_0\|_{H^{a,b}}.
$$
For $T \in (0,1]$ write
$$
X (T) = \mathcal C([-T,T], H^{a,b}) \cap L^p([-T,T], W^{\kappa,q}(M))
$$
By Strichartz inequality, since $[-T,T] \subseteq [-1,1]$, we have that there exists a constant $C$ independent on $T$ such that for all $u_0 \in H^{a,b}$
$$
\|S(t)u_0\|_{X(T)} \leq C \|u_0\|_{H^{a,b}}.
$$

Let $R = \|u_0\|_{H^{a,b}}$, we prove that $B = B_{X(T)}(0,2CR)$ is stable under 
$$
A_{u_0}  : u \mapsto \Big( t \mapsto S(t) u_0  - i\int_{0}^t S(t-\tau) |u(\tau)|^r u(\tau) d\tau \Big)
$$
where $|u|^2$ is either the density of mass $|u|^2 = \an{u,u}_{\C^4}$ or the density of charge $|u|^2 = \an{u,\beta u}_{\C^4}$ if $T$ is small enough.

What is more, we prove that $A_{u_0}$ is contracting on $B$ if $T$ is small enough.

Let $u \in B$, we have 
$$
\|A_{u_0}(u)\|_{X(T)} \leq C \|u_0\|_{H^{a,b}} + C \||u|^r u\|_{L^1_T H^{a,b}}.
$$
Notice that in the course of this proof we will use the more convenient notation $L^p_T=L^p_t([-T,T])$.
We use the Leibniz rule to get
$$
\||u|^r u\|_{ H^{a,b}} \lesssim \|u\|_{L^\infty}^r \|u\|_{H^{a,b}}.
$$
We then use the fact that $\kappa > \frac3{q}$ and Sobolev's estimates to get
$$
\|u\|_{L^\infty} \lesssim \|u\|_{W^{\kappa,q}(M)}.
$$
To sum up, we get that
$$
\|A_{u_0}(u)\|_{X(T)} \leq C \|u_0\|_{H^{a,b}} + C_1 \|\; \|u\|_{W^{\kappa,q}(M)}^r \|u\|_{H^{a,b}}\|_{L^1_T}.
$$
Finally, we use that $p > r' \geq r$ to use H\"older inequality :
$$
\|A_{u_0}(u)\|_{X(T)} \leq C \|u_0\|_{H^{a,b}} + C_1 \|\; \|u\|_{L^p_TW^{\kappa,q}(M)}^r \|u\|_{L^\infty_TH^{a,b}}\|1\|_{L^{p/(p-r)}_T}.
$$
Using that $u \in B$, we get
$$
\|A_{u_0}(u)\|_{X(T)} \leq C R + C_1 (2CR)^{r+1}|2T|^{(p-r)/p}.
$$
Taking $T \leq \frac1{2^{(2p-r)/(p-r)} C_1^{p/(p_r)}(2CR)^{rp/(p-r)}} = \frac1{C_2 R^{pr/(p-r)}}$, we get that
$$
\|A_{u_0}(u)\|_{X(T)} \leq 2CR
$$
hence $B$ is stable under $A_{u_0}$. 

We proceed in the same way for the contraction and get
$$
\|A_{u_0}(u) - A_{u_0}(v)\|_{X(T)}  \leq C_3 R^{r} T^{(p-r)/p}\|u-v\|_{X(T)} .
$$
Hence for $T \leq \Big(\frac1{2C_3R^r}\Big)^{p/(p-r)} = \frac1{C_4R^{rp/(p-r)}}$, we get
$$
\|A_{u_0}(u) - A_{u_0}(v)\|_{X(T)}  \leq \frac12 \|u-v\|_{X(T)} 
$$
which make $A_{u_0}$ contracting which implies Theorem \ref{teo3}.
\end{proof}

\subsection{Proof of Theorem \ref{corsolmod}}

The crucial remark here is that the first partial wave subspaces are left invariant by the action of the nonlinear term $|\langle \beta u,u\rangle|^p u$ for every $p$. More precisely, it is possible to prove the following
\begin{lemma}\label{1/2}
Let $j=1/2$ and let $(m_{1/2}, k_{1/2})$ be one of the couples
$$
(-1/2,-1), (-1/2,1), (1/2,-1), (1/2,1).
$$
Then the partial wave subspaces $C^\infty_0((0,\infty),dr)\otimes  \mathcal{H}_{m_{1/2},k_{1/2}}$ are invariant for the nonlinear terms $F(u)=\langle u,u\rangle u$ and $\langle \beta u,u\rangle u$, i.e. \begin{equation}\label{stab}
u\in C^\infty_0((0,\infty),dr)\otimes  \mathcal{H}_{m_{1/2},k_{1/2}}\Rightarrow F(u)\in C^\infty_0((0,\infty),dr)\otimes  \mathcal{H}_{m_{1/2},k_{1/2}}.
\end{equation}
\end{lemma}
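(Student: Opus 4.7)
My plan is to reduce the stability question to a single pointwise identity on $\mathbb S^2$. Fix one of the four admissible pairs $(m_{1/2},k_{1/2})$ and let $\{\Phi^+,\Phi^-\}$ denote the orthonormal basis of $\mathcal H_{1/2,m_{1/2},k_{1/2}}$ inherited from Subsection \ref{Diag}. Any $u\in C^\infty_0((0,\infty),dr)\otimes\mathcal H_{1/2,m_{1/2},k_{1/2}}$ decomposes as $u(r,\omega)=f(r)\Phi^+(\omega)+g(r)\Phi^-(\omega)$ with $f,g\in C^\infty_0((0,\infty))$, and expanding pointwise in $\C^4$ gives, for $\kappa\in\{0,1\}$,
$$
\an{\beta^\kappa u,u}_{\C^4}(r,\omega)=|f|^2\an{\beta^\kappa\Phi^+,\Phi^+}_{\C^4}(\omega)+|g|^2\an{\beta^\kappa\Phi^-,\Phi^-}_{\C^4}(\omega)+2\re\bigl(f\bar g\,\an{\beta^\kappa\Phi^-,\Phi^+}_{\C^4}(\omega)\bigr).
$$
If I can show that each of the three pointwise inner products on the right is constant in $\omega$, then $\an{\beta^\kappa u,u}_{\C^4}$ is a purely radial function $A(r)\in C^\infty_0((0,\infty))$, and the nonlinearity $\an{\beta^\kappa u,u}u=A(r)u$ visibly remains in $C^\infty_0((0,\infty),dr)\otimes\mathcal H_{1/2,m_{1/2},k_{1/2}}$. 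The whole lemma thus reduces to a static statement on $\mathbb S^2$.

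Next I would use the ambient $\C^4$-block structure to kill the cross term and the $\beta$ factor for free. The local rotation $R_1\oplus R_1$ defining $\mathcal H_{j,m_j,k_j}$ is pointwise unitary on $\C^4$ and, being block-diagonal with identical $2\times 2$ blocks, commutes with $\beta=\mathrm{diag}(I_2,-I_2)$. All pointwise $\C^4$-inner products (with or without $\beta$ inserted) are therefore invariant under this rotation, so I may replace $\Phi^\pm$ by the unrotated $F^\pm_{1/2,m_{1/2}}$ (if $k_{1/2}=+1$) or $G^\pm_{1/2,m_{1/2}}$ (if $k_{1/2}=-1$). By the very definition of these vectors, one of them lives in the upper $\C^2$-block of $\C^4$ and the other in the lower block, so pointwise $\an{\Phi^+,\Phi^-}_{\C^4}=0$ and $\beta$ acts on each as a sign $\varepsilon^\pm\in\{\pm 1\}$. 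This immediately wipes out the cross term, forces $\an{\beta\Phi^+,\Phi^-}_{\C^4}=0$, and leaves only the diagonal norms $|\Phi^\pm|^2=|E^\pm_{1/2,m_{1/2}}|^2$ to handle.

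The last (and only non-formal) step is the verification that $|E^\pm_{1/2,m_{1/2}}|^2$ is constant on $\mathbb S^2$, and this is where the hypothesis $j=\tfrac12$ is genuinely used. For $E^+$ the explicit formula from Subsection \ref{Diag} only involves the constant spherical harmonic $Y_{0,0}$, yielding $|E^+|^2=|Y_{0,0}|^2=\tfrac1{4\pi}$ without effort. For $E^-$ the formula produces a specific weighted combination of degree-one harmonics, namely $(\sqrt{1/3}\,Y_{1,0},-\sqrt{2/3}\,Y_{1,1})$ or $(\sqrt{2/3}\,Y_{1,-1},-\sqrt{1/3}\,Y_{1,0})$, and a short computation with the standard identities $|Y_{1,0}|^2=\tfrac3{4\pi}\cos^2\theta$, $|Y_{1,\pm1}|^2=\tfrac3{8\pi}\sin^2\theta$ gives $|E^-|^2=\tfrac1{4\pi}$ in both cases; alternatively this is a special instance of the addition formula for spherical harmonics.

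The main obstacle I anticipate is precisely this last constancy: the identity $|E^-_{1/2,m_{1/2}}|^2=\tfrac1{4\pi}$ is specific to the low-dimensional angular content at $j=\tfrac12$ and fails for $j\geq\tfrac32$, which is the structural reason why Lemma \ref{1/2} (and therefore Theorem \ref{corsolmod}) has to be restricted to the first partial wave subspaces. Every other step above is a purely formal manipulation using only the block-structure of $F^\pm$, $G^\pm$ and the pointwise unitarity of $R_1\oplus R_1$, so once this constancy is established the lemma follows at once.
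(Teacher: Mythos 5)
Your proof is correct and follows essentially the same route as the paper: both establish that the scalar $\langle \beta^\kappa u,u\rangle$ is purely radial by computing the pointwise $\C^4$-inner products of the angular basis vectors explicitly and observing that $\cos^2\theta+\sin^2\theta=1$ makes the degree-one contribution collapse to a constant. What you add is a cleaner organization: using the two-block form of $F^\pm_{j,m_j}$ (resp.\ $G^\pm_{j,m_j}$) together with the pointwise unitarity of $R_1\oplus R_1$ and its commutation with $\beta$ to dispose of the cross terms and the $\beta$-insertion structurally, whereas the paper simply writes out all four pairs $\Phi^\pm$ and substitutes. Your closing remark — that the constancy of $|E^-_{1/2,m_j}|^2$ is a genuinely $j=\tfrac12$ phenomenon, which is why the lemma (and hence Theorem \ref{corsolmod}) cannot be pushed to higher partial waves — is a correct and useful observation that the paper leaves implicit. (Your displayed coefficients $\sqrt{1/3},\sqrt{2/3}$ in fact match the paper's explicit vectors in \eqref{phi1}--\eqref{phi4}, so the minor apparent mismatch with the $R_1^*E^-$ formula in Subsection \ref{Diag} is only a normalization discrepancy in that earlier display, not an error in your argument.)
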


\begin{proof}
The proof of this result is already contained in \cite{cac1}, but we report it here for the sake of completeness. We write down for $j=1/2$ the functions $\Phi^+$, $\Phi^-$ that span $\mathcal H_{j,m_j,k_j}$ in the four cases: they turn out to be
\begin{equation}\label{phi1}
\Phi^+_{-1/2,-1}=
\left(\begin{array}
{cc}
0\\ \displaystyle\frac{i}{2\sqrt\pi}\\
0\\0
\end{array}\right)\qquad
\Phi^-_{-1/2,-1}=
\left(\begin{array}
{cc}
0\\0\\
\displaystyle\frac{1}{2\sqrt\pi}e^{i\phi}\sin\theta\\
-\displaystyle\frac{1}{2\sqrt\pi}\cos\theta
\end{array}\right),
\end{equation}

\begin{equation}\label{phi2}
\Phi^+_{-1/2,1}=
\left(\begin{array}
{cc}
\displaystyle\frac{i}{2\sqrt\pi}e^{i\phi}\sin\theta\\
\displaystyle
-\frac{i}{2\sqrt\pi}\cos\theta\\
0\\0
\end{array}\right)\qquad
\Phi^-_{-1/2,1}=
\left(\begin{array}
{cc}
0\\0\\
0\\\displaystyle\frac{1}{2\sqrt\pi}
\end{array}\right),
\end{equation}

\begin{equation}\label{phi3}
\Phi^+_{1/2,-1}=
\left(\begin{array}
{cc}
\displaystyle\frac{i}{2\sqrt\pi}\\
0 \\
0\\0
\end{array}\right)\qquad
\Phi^-_{1/2,-1}=
\left(\begin{array}
{cc}
0\\0\\
\displaystyle\frac{1}{2\sqrt\pi}\cos\theta\\
\displaystyle\frac{1}{2\sqrt\pi}e^{i\phi}\sin\theta
\end{array}\right),
\end{equation}

\begin{equation}\label{phi4}
\Phi^+_{1/2,1}=
\left(\begin{array}
{cc}
\displaystyle\frac{i}{2\sqrt\pi}\cos\theta\\
\displaystyle
\frac{i}{2\sqrt\pi}e^{i\phi}\sin\theta\\
0\\0
\end{array}\right)\qquad
\Phi^-_{1/2,1}=
\left(\begin{array}
{cc}
0\\0\\
\displaystyle\frac{1}{2\sqrt\pi}\\0
\end{array}\right).
\end{equation}
We prove our statement for the couple $(1/2,1)$, i.e. for functions of the form (\ref{phi4}), being the proof for the other cases completely analogous. The generic function $u\in L^2((0,\infty),dr)\otimes  \mathcal{H}_{1/2,1}$ can be written as
$$
u(r,\theta,\phi)=u^+(r)\Phi^+_{1/2,1}(\theta,\phi)+u^-(r)\Phi^-_{1/2,1}(\theta,\phi)
$$
for some radial functions $u^+$, $u^-$. So $u$ takes the vectorial form
\begin{equation}\label{generic}
u=\left(\begin{array}
{cc}
\displaystyle u^+(r)\frac{i}{2\sqrt\pi}\cos\theta\\
\displaystyle
u^+(r)\frac{i}{2\sqrt\pi}e^{i\phi}\sin\theta\\
\displaystyle\frac{u^-(r)}{2\sqrt\pi}\\0
\end{array}\right).
\end{equation}
Thus the Hermitian product $\langle u,u\rangle$ yields
$$
\langle u,u\rangle=\displaystyle \frac{1}{4\pi}\cos^2\theta\: |u^+(r)|^2
+\frac{1}{4\pi}\sin^2\theta\: |u^+(r)|^2+\frac{1}{4\pi}|u^-(r)|^2=
$$
$$
=\frac{1}{4\pi}\left(|u^+(r)|^2+|u^-(r)|^2\right)
$$
that has no angular components. This proves that if $u\in C^\infty_0((0,\infty),dr)\otimes 
\mathcal{H}_{m_{1/2},k_{1/2}}$ then $\langle u,u\rangle u\in C^\infty_0((0,\infty),dr)\otimes  \mathcal{H}_{m_{1/2},k_{1/2}}$. Minor modifications yield the same result also for the nonlinear term $\langle \beta u,u\rangle u$. We have remarked indeed that the operator $\beta$ acts on the partial wave subspaces in a very simple way with respect to the basis $\{\Phi^+,\Phi^-\}$: if in fact we associate to the function $u$ its coordinates $(u^+(r),u^-(r))$ with respect to such a basis we have $\beta u=(u^+(r),-u^-(r))$, so that
$$
\langle \beta u,u\rangle=\displaystyle \frac{1}{4\pi}\cos^2\theta\: |u^+(r)|^2
+\frac{1}{4\pi}\sin^2\theta\: |u^+(r)|^2-\frac{1}{4\pi}|u^-(r)|^2=
$$
$$
=\frac{1}{4\pi}\left(|u^+(r)|^2-|u^-(r)|^2\right)
$$
that again has no angular components, and this shows that if $u\in C^\infty_0((0,\infty),dr)\otimes 
\mathcal{H}_{m_{1/2},k_{1/2}}$ then $\langle \beta u,u\rangle u\in C^\infty_0((0,\infty),dr)\otimes  \mathcal{H}_{m_{1/2},k_{1/2}}$.
\end{proof}

\begin{proof}[Proof of Theorem \ref{corsolmod}] The theorem is a direct consequence of Theorem \ref{teo3} and Lemma \ref{1/2}. We omit the details.

\end{proof}

\section{Acknowledgements} The second author is supported by ANR-18-CE40-0028 project ESSED.


\begin{thebibliography}{9}

\bibitem{abri} 
Abrikosov, Alexei A., Jr.
\newblock Fermion states on the sphere $\mathbb S^2$. \newblock {\em  Int.J.Mod.Phys. A} 17 (2002) 885-889 


\bibitem{banduy}
V. Banica and T. Duyckaerts.
\newblock Weighted Strichartz estimates for radial Schrödinger equation on noncompact manifolds. \newblock {\em Dyn. Partial Differ. Equ.} 4 (2007), no. 4, 335-359. 
 
 \bibitem{bejher}
I. Bejenaru and S. Herr. 
\newblock The cubic Dirac equation: Small initial data in $H^1(\mathbb{R}^3)$. 
\newblock to appear in {\em Comm. Math. Phys.}

\bibitem{boussdanfan}
N. Boussaid, P. D'Ancona and L. Fanelli,
\newblock Virial identity and weak dispersion for the magnetic dirac equation.
\newblock {\em Journal de Math\'{e}matiques Pures et Appliqu\'{e}es},
  95:137--150, 2011.
  
  \bibitem{burq}
N. Burq,
\newblock Global Strichartz estimates for nontrapping geometries: about an article by H. F. Smith and C. D. Sogge. 
\newblock {\em Comm. Partial Differential Equations}, 28(9-10):1675-1683, 2003.

\bibitem{burq1}
N. Burq, F. Planchon, J.G. Stalker and A. Tahvildar-Zadeh Shadi.
\newblock Strichartz estimates for the wave and Schr\"odinger equations with the inverse-square potential. 
\newblock {\em J. Funct. Anal.} 203 (2), 519--549 (2003). 

\bibitem{burq2}
N. Burq, F. Planchon, J.G. Stalker and A. Tahvildar-Zadeh Shadi.
\newblock Strichartz estimates for the wave and Schrödinger equations with potentials of critical decay. 
\newblock {\em Indiana Univ. Math.} J. 53 (2004), no. 6, 1665-1680.

\bibitem{cac1}
F. Cacciafesta. 
\newblock Global small solutions to the critical Dirac equation with potential. 
\newblock {\em Nonlinear Analysis} 74, 6060--6073 (2011).

\bibitem{cac2}
F. Cacciafesta and P. D'Ancona.
\newblock Endpoint estimates and global existence for the nonlinear Dirac equation with a potential.
\newblock  {\em J. Differential Equations} 254 (2013), pp. 2233-2260.

\bibitem{cacdes1}
F. Cacciafesta and A.S. de Suzzoni.
\newblock Weak dispersion for the Dirac equation on asymptotically flat and warped products spaces, 
\newblock submitted

\bibitem{cacdan}
F. Cacciafesta and P. D'Ancona.
\newblock Endpoint estimates and global existence for the nonlinear Dirac equation with a potential.
\newblock  {\em J. Differential Equations} 254, 2233--2260 (2013).


\bibitem{cacser}
F. Cacciafesta and Eric S\'er\'e.
\newblock Local smoothing estimates for the Dirac Coulomb equation in 2 and 3 dimensions.
\newblock {\em J. Funct. Anal.} 271  no.8, 2339-2358 (2016)

\bibitem{camp}
R. Camporesi and A Higuchi.
\newblock On the eigenfunctions of the Dirac operator on spheres
and real hyperbolic spaces.
\newblock {\em Journal of Geometry and Physics} 20 (1996) l-1 8.

\bibitem{danfan}
P. D'Ancona and L. Fanelli.
\newblock Strichartz and smoothing estimates of dispersive equations with magnetic potentials.
\newblock {\em Comm. Partial Differential Equations} 33 (2008), no. 4-6, 1082-1112.

\bibitem{daixu}
F. Dai and Y. Xu,
\newblock Approximation theory and harmonic analysis on spheres and balls
\newblock {\em Springer Monographs in Mathematics}

\bibitem{daude1}
T. Daud\'e, D. Gobin, and F. Nicoleau.
\newblock Local inverse scattering at fixed energy in spherically symmetric asymptotically hyperbolic manifolds.
\newblock {\em Inverse Probl. Imaging }10 (2016), no. 3, 659-688.

\bibitem{danzha}
P. D'Ancona and Q. Zhang.
\newblock Global existence of small equivariant wave maps on rotationally symmetric manifolds. 
\newblock {\em Int. Math. Res. Not. IMRN} 2016, no. 4, 978-1025.
 
 \bibitem{dias}
Jo{\~a}o-Paulo Dias and M{\'a}rio Figueira.
\newblock Global existence of solutions with small initial data in {$H^s$} for
  the massive nonlinear {D}irac equations in three space dimensions.
\newblock {\em Boll. Un. Mat. Ital. B (7)}, 1(3):861--874, 1987.

 \bibitem{escveg}
Miguel Escobedo and Luis Vega.
\newblock A semilinear {D}irac equation in {$H^s({\bf R}^3)$} for {$s>1$}.
\newblock {\em SIAM J. Math. Anal.}, 28(2):338--362, 1997.


 \bibitem{keeltao}
Markus Keel and Terence Tao.
\newblock Endpoint {S}trichartz estimates.
\newblock {\em Amer. J. Math.}, 120(5):955--980, 1998.

\bibitem{lanlif}
L.D. Landau and L.M. Lifshitz.
\newblock{Quantum mechanics - Relativistic quantum theory}.

\bibitem{machnach}
Shuji Machihara, Makoto Nakamura, Kenji Nakanishi, and Tohru Ozawa.
\newblock Endpoint {S}trichartz estimates and global solutions for the
  nonlinear {D}irac equation.
\newblock {\em J. Funct. Anal.}, 219(1):1--20, 2005.

\bibitem{naj}
Branko Najman.
\newblock The nonrelativistic limit of the nonlinear {D}irac equation.
\newblock {\em Ann. Inst. H. Poincar\'e Anal. Non Lin\'eaire}, 9(1):3--12,
  1992.


\bibitem{parktoms}
L. E. Parker and D. J. Toms,
\newblock Quantum field theory in curved spacetime.
\newblock Cambridge university press.

\bibitem{pet}
P. Petersen. 
\newblock Riemannian geometry. Graduate Texts in Mathematics. 171. New York, NY:
\newblock Springer. xvi, 432 p., 1998.

\bibitem{reed}
Michael Reed.
\newblock  Abstract non-linear wave equations.
\newblock Lecture Notes in Mathematics, Vol. 507. Springer-Verlag, Berlin,
  1976.

\bibitem{thaller}
\newblock B. Thaller,
\newblock {\em The Dirac Equation},
\newblock {Springer-Verlag}, Texts and Monographs in Physics (1992).

\end{thebibliography}
\end{document}